\newtheorem{thm}{Theorem}[section]
\newtheorem{cor}[thm]{Corollary}
\newtheorem{lem}[thm]{Lemma}
\newtheorem{prop}[thm]{Proposition}
\newtheorem{defn}[thm]{Definition}
\newtheorem{rem}[thm]{Remark}
\newcommand{\Trace}{\mathrm{Trace}}
\newcommand{\Sym}{\mathrm{Sym}}
\newcommand{\Hom}{\mathrm{Hom}}
\newcommand{\CC}{\mathbb C}
\newcommand{\QQ}{\bar{\mathbb Q}_\ell}
\newcommand{\R}{\mathrm R}
\newcommand{\AAA}{\mathbb A}
\newcommand{\PP}{\mathbb P}
\newcommand{\FF}{\mathcal F}
\newcommand{\GG}{\mathbb G}
\newcommand{\GGG}{\mathcal G}
\newcommand{\HH}{\mathrm H}
\newcommand{\HHH}{{\mathcal H}}
\newcommand{\LL}{{\mathcal L}}
\newcommand{\Tr}{\mathrm {Tr}}
\newcommand{\Spec}{\mathrm{Spec\:}}
\newcommand{\Dbc}{{\mathcal D}^b_c(\AAA^1_k,\QQ)}
\newcommand{\Frob}{\mathrm{Frob}}
\newcommand{\Swan}{\mathrm{Swan}}
\begin{document}
\title{Improvements of The Weil Bound For Artin-Schreier Curves}
\author{Antonio Rojas-Leon}
\address{Departamanto de \'Algebra,
Universidad de Sevilla, Apdo 1160, 41080 Sevilla, Spain}
\address{E-mail: arojas@us.es}
\thanks{The research of Antonio Rojas-Leon is partially supported by P08-FQM-03894 (Junta de Andaluc\'{\i}a), MTM2007-66929 and FEDER. The research 
of Daqing Wan is partially supported by NSF}

\author{Daqing Wan}
\address{Department of Mathematics, University of California, Irvine, CA 92697-3875, USA}
\address{E-mail: dwan@math.uci.edu}

\begin{abstract}
For the Artin-Schreier curve $y^q  - y  =  f (x)$  defined over a finite field ${\mathbb F}_ q$ of $q$ elements, the 
celebrated Weil bound for the number of ${\mathbb F}_{q^r}$ -rational points can be sharp, especially in super- 
singular cases and when $r$ is divisible. In this paper, we show how the Weil bound can be significantly 
improved, using ideas from moment $L$-functions and Katz's work on $\ell$-adic monodromy calculations. 
Roughly speaking, we show that in favorable 
cases (which happens quite often), one can remove an extra $\sqrt{q}$ factor in the error term.
\end{abstract}

\maketitle

\section{Introduction} Let $k={\mathbb F}_q$ be a finite field of
characteristic $p>2$ with $q$ elements, and let $f\in k[x]$ be a polynomial of degree $d>1$. Without loss of
generality, we can and will always assume that $d$ is not divisible
by $p$. Let $C_f$ be the affine Artin-Schreier curve defined over $k$ by
$$y^q-y = f(x).$$
Let $r$ be a positive integer, and let $N_r(f)$ denote the number of ${\mathbb F}_{q^r}$-rational points on
$C_f$. The genus of the smooth projective model of $C_f$ is given by
$$g =(q-1)(d-1)/2.$$
The celebrated Weil bound in this case gives the estimate
$$|N_r(f) - q^r| \leq (d-1)(q-1)q^{\frac{r}{2}}.$$
This bound can be sharp in general, for instance when $C_f$ is
supersingular and $r$ is divisible. If $q^r$ is not a square, Serre's improvement
\cite{Se} leads to a somewhat better bound:
$$|N_r(f)-q^r| \leq {\frac{(d-1)(q-1)}{2}} [2q^{\frac{r}{2}}],$$
where $[x]$ denotes the integer part of a real number $x$.

In this paper, we shall show that if $q$ is large compared to $d$
(and thus the genus $g=(d-1)(q-1)/2$ is small compared to the field
size $q^r$ with $r\geq 2$), then the above Weil bound can be
significantly improved in many cases. The type of theorems we prove
is of the following nature. For simplicity, we just state one special
case.

\begin{thm}\label{introduction}Let $r\geq 1$ and $p>2$. If the derivative
$f'$ is square-free  
and either $r$ is odd or the hypersurface $f(x_1)+\cdots+f(x_r)=0$ in $\AAA^r_k$ is non-singular, then we
have the estimate
$$|N_r(f) - q^r| \leq C_{d,r}q^{\frac{r+1}{2}},$$
where $C_{d,r}$ is the constant
$$C_{d,r} = \sum_{a=0}^r |a-1|{d-2+r-a\choose r-a}{d-1\choose a}.$$
\end{thm}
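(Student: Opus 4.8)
The plan is to convert the point count into a sum of exponential sums twisted by a parameter $t$, and then to treat that outer sum cohomologically so as to gain one extra power of $q^{1/2}$. Fix a nontrivial additive character $\psi$ of $\Fq$. Since $y\mapsto y^q-y$ is $\Fq$-linear with image $\ker\Tr_{\Fq^r/\Fq}$ and fibres of size $q$, orthogonality of characters gives
$$N_r(f)-q^r=\sum_{t\in\Fq^*}S_r(tf),\qquad S_r(tf):=\sum_{x\in\Fq^r}\psi\!\left(\Tr_{\Fq^r/\Fq}(tf(x))\right).$$
For $t\neq0$ the sheaf $\LL_{\psi(tf)}$ on $\AAA^1$ is lisse of rank $1$, wild at $\infty$ with Swan conductor $d$, so $\HH^0_c=\HH^2_c=0$ and $\dim\HH^1_c=d-1$; thus $S_r(tf)=-\sum_{j=1}^{d-1}\alpha_j(t)^r$ with $|\alpha_j(t)|=q^{1/2}$. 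Estimating termwise recovers the Weil bound, so the entire gain must come from cancellation in the sum over $t$.

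I would package the $\alpha_j(t)$ into the lisse sheaf $\FF:=\R^1\pi_!\LL_{\psi(tf(x))}$ on $\GG_{m,t}$ (with $\pi$ the projection to the $t$-line), which is pure of weight $1$ and rank $d-1$ and whose geometric Frobenius eigenvalues at $t$ are the $\alpha_j(t)$. Then $S_r(tf)=-\Tr(\Frob_{q,t}^{\,r}\mid\FF)$, i.e. the inner sum is the $r$-th power sum $p_r$ of the Frobenius eigenvalues. The key algebraic input is a Newton-type identity chosen to minimize the eventual coefficients: starting from $p_r=\sum_{a=1}^{r}(-1)^{a-1}a\,e_ah_{r-a}$ (with $e_a,h_b$ the elementary and complete homogeneous symmetric functions) and subtracting the identically-zero combination $\sum_{a=0}^{r}(-1)^{a-1}e_ah_{r-a}=0$, one gets $p_r=\sum_{a=0}^{r}(-1)^{a-1}(a-1)\,e_ah_{r-a}$, whose coefficients $|a-1|$ are exactly those in $C_{d,r}$. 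On the sheaf side this reads
$$\Tr(\Frob^{\,r}\mid\FF)=\sum_{a=0}^{r}(-1)^{a-1}(a-1)\,\Tr\!\left(\Frob\mid\Lambda^a\FF\otimes\Sym^{r-a}\FF\right).$$
Applying the Lefschetz trace formula on $\GG_m$ to each genuine sheaf $\Lambda^a\FF\otimes\Sym^{r-a}\FF$, and using $\HH^0_c(\GG_m,-)=0$, reduces the whole estimate to controlling $\HH^1_c$ and $\HH^2_c$ of these sheaves.

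The crux, and the step I expect to be hardest, is to show $\HH^2_c(\GG_{m,\FFq},\Lambda^a\FF\otimes\Sym^{r-a}\FF)=0$ for all $a$ under the hypotheses; since $\HH^2_c$ is the Tate-twisted coinvariants, it suffices that each such sheaf have no $I_\infty$-invariants, i.e. be totally wild at $\infty$. This is where $f'$ square-free enters: by Katz--Laumon stationary phase, $f$ then has $d-1$ distinct nondegenerate critical points $c_1,\dots,c_{d-1}$, and the local monodromy of $\FF$ at $\infty$ is $\bigoplus_{i=1}^{d-1}\LL_{\psi(f(c_i)t)}\otimes\LL_{\chi_2}$ (all slopes $1$), where $\chi_2$ is the quadratic character produced by the Gaussian in the stationary phase expansion. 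A one-dimensional constituent of $\Lambda^a\FF\otimes\Sym^{r-a}\FF$ at $\infty$ then has the shape $\LL_{\psi(\sigma t)}\otimes\LL_{\chi_2}^{\otimes r}$ with $\sigma$ a sum of $r$ critical values $f(c_{i_j})$; it is tame only if $\sigma=0$, and trivial only if moreover $\chi_2^{\otimes r}$ is trivial. Now the affine hypersurface $f(x_1)+\cdots+f(x_r)=0$ is singular precisely at $r$-tuples of critical points whose critical values sum to $0$, so its non-singularity forces every $\sigma\neq0$ and hence total wildness; and when $r$ is odd, $\chi_2^{\otimes r}=\chi_2\neq 1$ kills all $I_\infty$-invariants even if some $\sigma$ vanishes. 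In either case $\HH^2_c=0$, and Deligne's theorem bounds the Frobenius eigenvalues on $\HH^1_c$ by $q^{(r+1)/2}$.

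It then remains to bound $\dim\HH^1_c(\GG_m,\Lambda^a\FF\otimes\Sym^{r-a}\FF)$. Since these sheaves are lisse on $\GG_m$ and $\HH^0_c=\HH^2_c=0$, the Euler--Poincar\'e formula gives $\dim\HH^1_c=\Swan_0+\Swan_\infty$; at $0$ the sheaf $\FF$ is tame (its local monodromy is built from characters of order dividing $d$, governing $f$ near $x=\infty$), so $\Swan_0=0$, while at $\infty$ total wildness makes $\Swan_\infty$ equal to the rank $\dim(\Lambda^a\FF\otimes\Sym^{r-a}\FF)=\binom{d-1}{a}\binom{d-2+r-a}{r-a}$ (and in general this rank is an upper bound for $\dim\HH^1_c$, which covers the odd case when some constituents are tame). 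Combining the Lefschetz expansion with the triangle inequality and the coefficients $|a-1|$ yields
$$|N_r(f)-q^r|\le\left(\sum_{a=0}^{r}|a-1|\binom{d-2+r-a}{r-a}\binom{d-1}{a}\right)q^{\frac{r+1}{2}}=C_{d,r}\,q^{\frac{r+1}{2}},$$
which is the claim.
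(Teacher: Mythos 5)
Your proposal is correct and follows essentially the same route as the paper's own proof: the same reduction of $N_r(f)-q^r$ to a moment sum over $t\in k^\star$ for the rank $d-1$ sheaf $\R^1\pi_!\LL_{\psi(tf(x))}$, the same optimal Newton/Adams decomposition with coefficients $(-1)^{a-1}(a-1)$, the same description of the $I_\infty$-monodromy as $\bigoplus_i \LL_{\psi(f(c_i)t)}\otimes\LL_{\chi_2}$ to rule out invariants (hence $\HH^2_c$) under the two alternative hypotheses, and the same Euler--Poincar\'e/Swan-conductor argument bounding $\dim\HH^1_c$ by the rank. The only cosmetic difference is that you phrase the $\HH^2_c$ vanishing via coinvariants rather than invariants, which is equivalent here since the $I_\infty$-action on each $\Sym^{r-a}\otimes\wedge^a$ piece is a direct sum of characters.
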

Note that the constant $C_{d,r}$ is independent of $q$ and it is a
polynomial in $d$ with degree $r$. Thus, for fixed $d$ and $r$, our
result essentially removes an extra $\sqrt{q}$ factor from Weil's
bound. The non-singularity hypothesis cannot be dropped in general,
as there are cases for $r$ even where we can have
$$|N_r(f) - (q^r + q^{\frac{r}{2}+1})| \leq C_{d,r} q^{\frac{r+1}{2}},$$
see section \ref{refinements} for more details. This gives further examples that the
$q$-factor in the Weil bound cannot be replaced by an $O(\sqrt{q})$
factor in general.

As an extreme illustration, we consider the elementary case that
$r=1$. It is clear that $N_1(f)=qn_f$, where $n_f$ is the number of
distinct roots of $f(x)$ in ${\mathbb F}_q$ which is at most $d$.
Thus, the best estimate in this case should be
$$|N_1(f)-q | \leq (d-1)q,$$
which is precisely what our bound gives!  It is far better than the
Weil bound
$$|N_1(f)-q| \leq (d-1)(q-1)\sqrt{q}.$$
For $r=2$, our bound takes the form
$$|N_2(f)-q^2| \leq (d-1)^2q^{3/2},$$
which is better than the Weil bound
$$|N_2(f)- q^2 | \leq (d-1)(q-1)q$$
as soon as $q\geq (d-1)^2+3$.
For $r=3$, our bound takes the form
$$|N_3(f)-q^3| \leq (d-1)(d^2-3d+3)q^2,$$
which is better than the Weil bound
$$|N_3(f)- q^3 | \leq (d-1)(q-1)q^{3/2}$$
as soon as $q\geq (d^2-3d+4)^2$.

Our idea is to translate $N_r(f)$ to moment exponential sums and
then calculate the associated moment $L$-function as explicitly as
possible. Let $\psi$ be a fixed non-trivial additive character of
$k$. For $f\in k[x]$, it is clear that we
have the formula
$$
N_r(f) = \sum_{t\in k}\sum_{x\in k_r}
\psi({\rm Tr}(tf(x))),
$$
where $k_r={\mathbb F}_{q^r}$ and ${\rm Tr}$ denotes the trace map from $k_r$ to $k$. Separating the term from $t=0$, we obtain
\begin{equation}\label{charactersum}
N_r(f)-q^r = \sum_{t\in k^\star}\sum_{x\in k_r}
\psi({\rm Tr}(tf(x))).
\end{equation}
Now, Weil's bound for exponential sums gives the estimate
$$
\left|\sum_{x\in k_r }\psi({\rm Tr}(tf(x)))\right| \leq (d-1)q^{\frac{r}{2}}
$$
for every $t\in k^\star$. It follows that
$$|N_r(f)-q^r| \leq (q-1)(d-1)q^{\frac{r}{2}}.$$
In order to improve this bound, we need to understand the
cancelation of the outer sum of (\ref{charactersum}) over $t\in k^\star$.
Heuristically, one expects that the outer sum contributes another
$O(\sqrt{q})$ factor instead of the trivial $q$ factor, if $f$ is
sufficiently ``random". This is in fact what we shall prove using
the full strength of Deligne's general theorem on Riemann
hypothesis.

The double sum in (\ref{charactersum}) is precisely a moment exponential sum
associated to the two variable polynomial $tf(x)$. Thus, we can use
the techniques of moment $L$-functions to get improved information
about the solution number $N_r(f)$. We now briefly outline our
method. Let $\ell$ be a fixed prime different from $p$. Let $\GGG_f$
denote the relative $\ell$-adic cohomology with compact support
associated to the family of one variable exponential sums attached to
$tf(x)$, where $x$ is the variable and $t$ is the parameter on the
torus ${\mathbb G}_m$. Applying the $\ell$-adic trace formula fibre
by fibre, we obtain
$$\sum_{t\in k^\star}\sum_{x\in k_r }\psi({\rm Tr}(tf(x))) =-\sum_{t\in
k^\star } {\rm Tr}(\Frob_q^r|(\GGG_f)_t),$$ where $(\GGG_f)_t$
is the fibre of $\GGG_f$ at $t$, and ${\rm Frob}_q$ is the geometric
$q$-th power Frobenius map. Alternatively, one can rewrite
$${\rm Tr}({\rm Frob}_q^r|(\GGG_f)_t) = {\rm Tr}({\rm Frob}_q | [\GGG_f]^r_t),$$
where $[\GGG_f]^r$ denotes the $r$-th Adams operation of $\GGG_f$. It is a
virtual $\ell$-adic sheaf on ${\mathbb G}_m$. For example, Katz \cite{KatzFS} used the 
formula 
 $$[\GGG_f]^r =   \sum_{i=1}^r (-1)^{i-1} i \cdot {\rm Sym}^{r-i}\GGG_f \otimes \wedge^i
\GGG_f.$$ 
We shall use the following optimal formula from \cite{Wan} given by 
$$[\GGG_f]^r =\sum_{i=0}^r (-1)^{i-1} (i -1)\cdot {\rm Sym}^{r-i}\GGG_f \otimes \wedge^i
\GGG_f.$$ 
Note that the term $i=0$ does not occur in the first formula, and the term $i=1$ does not 
occur in the second formula as the coefficient becomes zero for $i=1$.  
The coefficients of the second formula are smaller and thus lead to fewer number of zeros and 
poles for the corresponding $L$-functions. In this way, we get the smaller constant $C_{d,r}$ 
in Theorem 1.1. 

It follows that
$$N_r(f)-q^r =\sum_{i=0}^r (-1)^{i} (i -1)\cdot \sum_{t\in k^\star} {\rm Tr}(\Frob_q |
({\rm Sym}^{r-i}\GGG_f \otimes \wedge^i \GGG_f)_t).$$ This reduces
our problem to the study of the $L$-function over ${\mathbb G}_m$ of
the $\ell$-adic sheaves ${\rm Sym}^{r-i}\GGG_f \otimes \wedge^i
\GGG_f$ for all $0\leq i\leq r$. By general results of Deligne
\cite{De}, we deduce that
$$|N_r(f)- ( q^r +\delta_{f,r} q^{\frac{r}{2}+1})| \leq C_{d,r} q^{\frac{r+1}{2}},$$
where $C_{d,r}$ comes from the Euler characteristic of the components of the virtual
sheaf $[\GGG_f]^r$, and
$$\delta_{f,r} =\sum_{i=0}^r (-1)^{i-1} (i-1) \cdot {\rm dim} \HH_c^2({\mathbb G}_{m,\bar k},
{\rm Sym}^{r-i}\GGG_f \otimes \wedge^i \GGG_f).$$
 Under the conditions of Theorem \ref{introduction}, it follows that the sheaf ${\rm Sym}^{r-i}\GGG_f \otimes
\wedge^i \GGG_f$ has no geometrically trivial component for any $0\leq
i\leq r$, and thus we deduce that $\delta_{f,r}=0$.

Our main result is somewhat stronger. We determine the weights, the trivial factors and the
degrees of the $L$-functions of all the sheaves ${\rm Sym}^{r-i}\GGG_f
\otimes \wedge^i \GGG_f$, thus obtaining a fairly
complete information about the associated moment $L$-function, see
\cite{FW1}\cite{FW2} and \cite{AW} for the study of moment
$L$-functions in two other examples, namely, the family of
hyper-Kloosterman sums and the Dwork family of toric Calabi-Yau
hypersurfaces. Under slightly more general hypotheses, Katz's results on monodromy group
calculations \cite{monodromy}\cite{esde} give stronger results which lead to further improvements
of Theorem \ref{introduction} (see Corollaries \ref{estimateSL} and \ref{estimateSp}). See also \cite{KatzFS} for a result on the average number of rational points on hypersurfaces obtained using a similar approach.

The possibility of our improvement for the Weil bound in the case of
Artin-Schreier curves is due to the fact that the curve has a large
automorphism group ${\mathbb F}_q$, which is the group of ${\mathbb
F}_q$-rational points on the group scheme ${\mathbb A}^1$. We expect
that similar improvements should exist for many other curves (or
higher dimensional varieties) with a large automorphism group. For
example, in the last section of this paper we treat the case
of Artin-Schreier hypersurfaces
$$y^q-y =f(x_1,..., x_n).$$
This method leads to similar improvements of Deligne's bound for
such hypersurfaces in many cases. As an explicit new example to try,
we would suggest the affine Kummer curve of the form
$$y^{\frac{(q-1)}{e}} = f(x),$$
where $e$ is a fixed positive integer, $q$ is a prime power congruent to
$1$ modulo $e$, and $f(x)\in k[x]$ is a polynomial of
degree $d$. For $r\geq 1$ and $N_r(f, e)$ denoting the number of
${\mathbb F}_{q^r}$-rational points on the above Kummer curve, we
conjecture that for certain generic $f$, there is the following
estimate
$$|N_r(f,e) -q^r| \leq s_{d,e,r} q^{\frac{r+1}{2}},$$
where $s_{d,e,r}$ is a constant independent of $q$. We do not
know how to prove this conjecture, even in the case $e=1$.

To conclude this introduction, we raise another open problem. In
Theorem 1, we assumed that the curve $C_f: y^q-y=f(x)$ is defined
over the subfield ${\mathbb F}_q$ of ${\mathbb F}_{q^r}$. We believe
that similar improvement is also true if $C_f$ is defined over the
larger field ${\mathbb F}_{q^r}$. But we could not prove this at
present.

{\bf Remarks}. Weil's estimate gives both an upper bound and a lower
bound for the number of rational points on a curve of genus $g$ over
the finite field ${\mathbb F}_q$. Improvements for the lower bound
are in general harder to get. Improvements for the upper bound can
often be obtained by more elementary means. In fact, there are
already several such results in the literature for large genus
curves. The first result along these lines is due to Stark \cite{St}
in the hyperelliptic case, using Stepanov's method. Using the
explicit formula, Drinfeld-Vladut and Serre \cite{Se} obtained an
upper bound improvement when $2g> q^r -q^{r/2}$, which in our
Artin-Schreier setting becomes
$$(d-1)(q-1) > q^r -q^{r/2}.$$
For $r>1$, this means that $q$ must be small compared to $d$. In
comparison, our improvements apply when $q$ is large compared to
$d$. Using a geometric intersection argument, St\"oher-Voloch
\cite{SV} obtained another upper bound which in our case becomes
$$N_r(f) \leq {1\over 2}D(D+q^r-1),$$
where $D=\max(d, q)$.

{\bf Acknowledgment}. It is a pleasure to thank the referee for his careful reading of the first version and for his   
very helpful comments.

\section{Cohomology of the family $t\mapsto \sum\psi(\Tr(tf(x)))$}

Let $k={\mathbb F}_q$ be a finite field of characteristic $p$, and $f\in k[x]$ a
polynomial of degree $d$ prime to $p$. Let $C_f$ be the Artin-Schreier curve defined on $\AAA^2_k$ by the equation
\begin{equation}
 y^q-y=f(x)
\end{equation}
and denote by $N_r(f)$ its number of rational points over $k_r:={\mathbb F}_{q^r}$.

Fix a non-trivial additive character $\psi:k\to \CC^\star$. It is clear that
\begin{equation}
N_r(f)=\sum_{t\in k}\sum_{x\in k_r}\psi(t\cdot\Tr(f(x)))=\sum_{t\in k}\sum_{x\in k_r}\psi(\Tr(tf(x)))
\end{equation}
where $\Tr$ denotes the trace map $k_r\to k$.

Fix a prime $\ell\neq p$ and an isomorphism $\iota:\QQ\to{\mathbb C}$. Consider
the Galois \'etale cover of ${\mathbb G}_m\times{\mathbb A}^1$ (with
coordinates $(t,x)$) given by $u-u^q=tf(x)$, with Galois group $k$;
and let ${\mathcal L}_{\psi(tf(x))}$ be the rank 1 smooth
$\QQ$-sheaf corresponding to the representation of $k$ given by
$\psi^{-1}$ via $\iota$. Define $K_f=\R\pi_!{\mathcal L}_{\psi(tf(x))}\in{{\mathcal
D}^b_c({\mathbb G}_{m,k},\QQ)}$, where $\pi:{\mathbb
G}_m\times{\mathbb A}^1\to{\mathbb G}_m$ is the projection. The
trace formula implies that the trace of the action of the $r$-th power of a local geometric Frobenius element 
at $t\in k^\star$ on $K_f$ is given by $\sum_{x\in k_r}\psi(\Tr(tf(x)))$.

It is known \cite[3.7]{De} that $K_f={\mathcal G}_f[-1]$ for a
smooth sheaf ${\mathcal G}_f$ of rank $d-1$ and punctually pure of
weight $1$, whose local $r$-th power Frobenius trace at $t\in k^\star$ is then
given by $-\sum_{x\in k_r}\psi(\Tr(tf(x)))$. Therefore
\begin{equation}\label{nr}
N_r(f)-q^r=\sum_{t\in k^\star}\sum_{x\in k_r}\psi(\Tr(tf(x)))\end{equation}
$$=
-\sum_{t\in k^\star}\Tr({\rm Frob}^r_t|(\GGG_f)_t) =  -\sum_{t\in
k^\star}{\rm Tr}({\rm Frob}_t | [\GGG_f]^r_t)
$$
where
$$
[\GGG_f]^r =\sum_{i=0}^r (-1)^{i-1} (i -1)\cdot {\rm Sym}^{r-i}\GGG_f \otimes \wedge^i
\GGG_f
$$
is the $r$-th Adams operation on $\GGG_f$.

The sheaf ${\mathcal G}_f$ can also be interpreted in terms of the
Fourier transform. Consider the sheaf $f_\star\QQ$ on ${\mathbb
A}^1_k$. There is a canonical surjective trace map
$\phi:f_\star\QQ=f_\star f^\star\QQ\to\QQ$, let ${\mathcal F}_f$ be
its kernel. It is a constructible sheaf of generic rank $d-1$ on
${\mathbb A}^1_k$.

\begin{lem}\label{fourier} If $j:{\mathbb G}_{m,k}\to{\mathbb A}^1_k$ is the inclusion,
the shifted sheaf $j_!{\mathcal G}_f[1]$ is the Fourier transform of ${\mathcal F}_f[1]$ with
respect to $\psi$.\end{lem}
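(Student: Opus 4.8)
The plan is to compute the Fourier transform $\mathrm{FT}_\psi(\FF_f[1])$ directly from the defining short exact sequence of $\FF_f$ and to match the answer against the construction of $\GGG_f$. Recall that, in the Laumon--Katz normalization, $\mathrm{FT}_\psi(M)=\R\mathrm{pr}_{2!}(\mathrm{pr}_1^\star M\otimes\LL_{\psi(xt)})[1]$, where $\mathrm{pr}_1,\mathrm{pr}_2\colon\AAA^1\times\AAA^1\to\AAA^1$ are the two projections ($x$ the source coordinate, $t$ the target coordinate) and $\LL_{\psi(xt)}$ is the Artin--Schreier sheaf pulled back along the multiplication map. Since $\mathrm{FT}_\psi$ is a triangulated functor, applying it to the defining sequence $0\to\FF_f\to f_\star\QQ\to\QQ\to 0$, shifted by $[1]$, yields a distinguished triangle
$$\mathrm{FT}_\psi(\FF_f[1])\to\mathrm{FT}_\psi(f_\star\QQ[1])\to\mathrm{FT}_\psi(\QQ[1])\xrightarrow{+1}.$$

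First I would compute the middle term. By proper base change $\mathrm{pr}_1^\star f_\star\QQ\cong(f\times\mathrm{id})_\star\QQ$, and the projection formula, together with the finiteness of $f\times\mathrm{id}$, rewrites $\R\mathrm{pr}_{2!}(\mathrm{pr}_1^\star f_\star\QQ\otimes\LL_{\psi(xt)})$ as $\R q_!\LL_{\psi(tf(x))}$, where $q\colon\AAA^1\times\AAA^1\to\AAA^1$ is the projection onto $t$. Its restriction to $\GG_m$ is precisely $\R\pi_!\LL_{\psi(tf(x))}=K_f=\GGG_f[-1]$, so $\mathrm{FT}_\psi(f_\star\QQ[1])$ restricts on $\GG_m$ to $\GGG_f[1]$. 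A fibrewise computation shows that $\mathrm{FT}_\psi(\QQ[1])$ is the skyscraper $\delta_0(-1)$ supported at the origin, because $\HH^\star_c(\AAA^1_{\bar k},\LL_{\psi(xt)})$ vanishes for $t\neq 0$. Applying $j^\star$ to the triangle therefore kills the right-hand term, giving
$$j^\star\,\mathrm{FT}_\psi(\FF_f[1])\isomto\GGG_f[1].$$

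It remains to see that $\mathrm{FT}_\psi(\FF_f[1])$ is the extension by zero of this restriction, i.e. that its stalk at the origin vanishes. Writing $i\colon\{0\}\hookrightarrow\AAA^1$ for the complementary closed point, the localization triangle $j_!j^\star\to\mathrm{id}\to i_\star i^\star$ reduces the claim to proving $i^\star\,\mathrm{FT}_\psi(\FF_f[1])=0$; by proper base change this stalk is computed by $\R\Gamma_c(\AAA^1_{\bar k},\FF_f)[2]$, since $\LL_{\psi(xt)}$ is trivial along $t=0$. The vanishing of $\R\Gamma_c(\AAA^1_{\bar k},\FF_f)$ follows from the long exact cohomology sequence of $0\to\FF_f\to f_\star\QQ\to\QQ\to 0$: the groups $\HH^i_c(\AAA^1,f_\star\QQ)\cong\HH^i_c(\AAA^1,\QQ)$ are concentrated in degree $2$, where the trace map $\phi$ induces an isomorphism (it is nonzero because its composite with the adjunction unit is multiplication by $\deg f=d$), forcing every $\HH^i_c(\AAA^1,\FF_f)$ to vanish. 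Combining this with the previous step yields $\mathrm{FT}_\psi(\FF_f[1])=j_!\GGG_f[1]$.

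The step I expect to be the main obstacle is the bookkeeping at the origin $t=0$: one must verify both that $\mathrm{FT}_\psi(\QQ[1])$ contributes only there and that the stalk of $\mathrm{FT}_\psi(\FF_f[1])$ genuinely vanishes there, which is exactly what upgrades the restriction isomorphism on $\GG_m$ to an honest $j_!$-extension rather than some other gluing. The crucial input for this is that the trace map $\phi$ is an isomorphism on top compactly supported cohomology. By contrast, the identification of the middle term on $\GG_m$ via base change and the projection formula is routine, once the normalizations of $\mathrm{FT}_\psi$ and of $\LL_{\psi(tf(x))}$ are made to agree.
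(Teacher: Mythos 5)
Your proposal is correct and follows essentially the same route as the paper's proof: apply $FT_\psi$ to the triangle $\FF_f[1]\to f_\star\QQ[1]\to\QQ[1]\to$, identify the middle term on $\GG_{m,k}$ with $\GGG_f[1]$ via base change and the projection formula, note $FT_\psi(\QQ[1])$ is punctual at $0$, and reduce the $j_!$-extension claim to the vanishing of $\R\Gamma_c(\AAA^1_{\bar k},\FF_f)$, proved by the same long exact sequence. Your only addition is making explicit why the trace map is an isomorphism on $\HH^2_c$ (composite with the unit is multiplication by $d\neq 0$), a detail the paper leaves implicit.
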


{\bf Proof.} Taking Fourier transform in the distinguished triangle
in $\Dbc$:
$$
{\mathcal F}_f[1]\to f_\star\QQ[1]\to \QQ[1]\to
$$
we get a distinguished triangle:
$$
FT_\psi({\mathcal F}_f)[1]\to
FT_\psi(f_\star\QQ)[1]\to(\QQ)_0(-1)[0]\to.
$$
where $(\QQ)_0$ is a punctual sheaf supported at $0$. If
$\mu:\AAA^1\times\AAA^1\to\AAA^1$ is the multiplication map, the
Fourier transform of $f_\star\QQ[1]$ is given by
$\R{\pi_1}_!(\pi_2^\star f_\star\QQ\otimes\mu^\star{\mathcal
L}_{\psi})[2]=\R{\pi_1}_!({\mathcal L}_{\psi(tf(x))})[2]$, where
$\pi_i:\AAA^1\times\AAA^1\to\AAA^1$ are the projections. In particular, by proper base change $j^\star FT_\psi(f_\star\QQ)[1]=j^\star\R{\pi_1}_!({\mathcal L}_{\psi(tf(x))})[2]=K_f[2]=\GGG_f[1]$. Applying $j^\star$ to the triangle above we find quasi-isomorphisms
$$
j^\star FT_\psi(\FF_f)[1]\cong\GGG_f[1]
$$
and 
$$
j_!j^\star FT_\psi(\FF_f)[1]\cong j_!\GGG_f[1].
$$

To conclude, it remains to show that the natural map $j_!j^\star FT_\psi(\FF_f)[1]\to FT_\psi(\FF_f)[1]$ is a quasi-isomorphism. Since its restriction to $\GG_{m,k}$ is a quasi-isomorphism, we only need to check that it induces a quasi-isomorphism on the stalks at (a geometric point over) $0$, that is, that $FT_\psi(\FF_f)_0=0$. By definition of the Fourier transform, $FT_\psi(\FF_f)_0=\R\Gamma_c(\AAA^1_{\bar k},\FF_f)$. We conclude by using the long exact sequence of cohomology with compact support associated to the sequence
$$
0\to\FF_f\to f_\star\QQ\to\QQ\to 0,
$$
since $\HH^i_c(\AAA^1_{\bar k},f_\star\QQ)=\HH^i_c(\AAA^1_{\bar k},\QQ)=0$ for $i\neq 2$ and $\HH^2_c(\AAA^1_{\bar k},f_\star\QQ)=\HH^2_c(\AAA^1_{\bar k},\QQ)=\QQ(-1)$ is one-dimensional.
\hfill$\Box$

\bigskip

We can now use Laumon's local Fourier transform theory to determine
the monodromy actions at $0$ and $\infty$ for ${\mathcal G}_f$. Recall that, for every character $\chi:k^\star\to\QQ^\star$, there is an associated Kummer sheaf $\LL_\chi$ on $\GG_{m,k}$: The $(q-1)$-th power map $\GG_{m,k}\to\GG_{m,k}$ is a Galois \'etale cover with Galois group canonically isomorphic to $k^\star$, and one just takes the pull-back of the character $\bar\chi$ to $\pi_1(\GG_{m,k},\bar\eta)\twoheadrightarrow k^\star$. For every $d|q-1$, if $[d]$ denotes the $d$-th power map $\GG_{m,k}\to\GG_{m,k}$ we have $[d]_\star\QQ=\bigoplus\LL_\chi$, where the sum is taken over all characters of $k^\star$ such that $\chi^d$ is trivial.

Assume that $k$ contains all $d$-th roots of unity. The sheaf $\FF_f$ is smooth on the complement $U$ of the set of the critical values of $f$ in $\AAA^1$. Since $d$ is prime to $p$, in a neighborhood of infinity the map $x\mapsto f(x)=a_dx^d(1+\frac{a_{d-1}}{a_dx}+\cdots+\frac{a_0}{a_dx^d})$ is equivalent (for the \'etale topology) to the map $x\mapsto a_d x^d$ (just by making the change of variable $x\mapsto \alpha x$, where $\alpha^d=1+\frac{a_{d-1}}{a_dx}+\cdots+\frac{a_0}{a_dx^d})$. In particular, the decomposition group $D_\infty$ at infinity acts on the generic stalk of $f_\star\QQ$ through the direct sum of the tame characters $(a_d)_\star\LL_\chi$ for all non-trivial characters $\chi$ of $k^\star$ such that $\chi^d={\mathbf 1}$, where $(a_d):\GG_{m,k}\to\GG_{m,k}$ is the multiplication by $a_d$ map. Since $(a_d)_\star\LL_\chi=(a_d^{-1})^\star\LL_\chi=\bar\chi(a_d)^{deg}\otimes\LL_\chi$, we conclude that $D_\infty$ acts on the generic stalk of $\FF_f$ via the direct sum $\bigoplus\bar\chi(a_d)^{deg}\otimes\LL_\chi$ taken over all non-trivial characters $\chi$ of $k^\star$ such that $\chi^d$ is trivial.

\begin{prop}\label{monodromy0} Suppose that $k$ contains all $d$-th roots of unity. The action of the decomposition group $D_0$ at $0$ on ${\mathcal G}_f$ is
tame and semisimple, and it splits as a direct sum $\bigoplus(\chi(a_d)g(\bar\chi,\psi))^{deg}\otimes\LL_\chi$ over
all non-trivial characters $\chi$ of $k^\star$ such that $\chi^d={\mathbf 1}$, where $g(\bar\chi,\psi):=-\sum_t\bar\chi(t)\psi(t)$ is the Gauss sum.\end{prop}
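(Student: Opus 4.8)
The plan is to realize $\GGG_f$ as a Fourier transform and then transport the already-computed local monodromy of $\FF_f$ at $\infty$ to the local monodromy of $\GGG_f$ at $0$ by means of Laumon's stationary phase principle. By Lemma \ref{fourier}, on $\GG_{m,k}$ the sheaf $\GGG_f$ is, up to the shift $[1]$, the $\psi$-Fourier transform $FT_\psi(\FF_f)$. Laumon's theory of the local Fourier transform (see \cite{esde}) then computes the $D_0$-representation on $FT_\psi(\FF_f)$ as $FT_\psi(\infty,0)$ applied to the slope-$<1$ part of the $D_\infty$-representation on $\FF_f$; the finite singularities of $\FF_f$ (the critical values of $f$) and any slope-$>1$ part at $\infty$ feed only into the monodromy of the transform near $\infty$, not near $0$.

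The crucial simplification is that $\FF_f$ is tame at $\infty$: the computation immediately preceding the proposition shows that $D_\infty$ acts on its generic stalk through $\bigoplus\bar\chi(a_d)^{deg}\otimes\LL_\chi$, a direct sum of Kummer characters taken over the non-trivial $\chi$ with $\chi^d=\mathbf 1$, each of slope $0<1$. Hence the entire $\infty$-representation contributes, and $D_0$ acts on $\GGG_f$ precisely through $FT_\psi(\infty,0)$ of this sum. This already delivers two of the three assertions: tameness, since the local Fourier transform of a tame object is tame, and semisimplicity, since the output will be a direct sum of distinct Kummer sheaves. Only the identification of the individual summands then remains.

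Next I would apply the explicit evaluation of $FT_\psi(\infty,0)$ on a non-trivial tame character \cite{esde}: it inverts the character, $\LL_\chi\mapsto\LL_{\bar\chi}$, and introduces a geometrically constant Gauss-sum factor. Heuristically this shape is already visible in the model Fourier integral $\sum_x\chi(x)\psi(tx)=\bar\chi(t)\sum_y\chi(y)\psi(y)$, which exhibits both the inverted character $\bar\chi$ and a Gauss sum; moreover the transform preserves rank, so the $d-1$ characters $\chi\neq\mathbf 1$ with $\chi^d=\mathbf 1$ yield exactly $d-1$ summands, matching the rank of $\GGG_f$ and confirming that $\GGG_f$ has no $D_0$-invariants, consistent with the vanishing $FT_\psi(\FF_f)_0=0$ established in the proof of Lemma \ref{fourier}. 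Carrying along the factor $\bar\chi(a_d)^{deg}$, combining it with the Gauss-sum factor, and reindexing the sum by the involution $\chi\mapsto\bar\chi$ of the set of non-trivial $d$-torsion characters turns the output into $\bigoplus(\chi(a_d)g(\bar\chi,\psi))^{deg}\otimes\LL_\chi$, which is the asserted decomposition.

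The hard part will be pinning down the Gauss-sum scalar exactly. The naive integral above gets the shape right but not the normalization: the genuine local Fourier transform carries shifts by $[1]$ and a Tate twist $(-1)$, and the final constant is sensitive to the choice of $j_!$ versus $j_*$ extension as well as to the sign in the convention $g(\bar\chi,\psi)=-\sum_t\bar\chi(t)\psi(t)$. Tracking all of these conventions consistently so that the scalar emerges as exactly $\chi(a_d)g(\bar\chi,\psi)$, rather than its negative or the variant with $g(\chi,\psi)$, is the delicate bookkeeping at the heart of the argument. By contrast, the structural conclusions — tameness, semisimplicity, and the precise set of Kummer characters $\LL_\chi$ that occur — are robust and follow directly from the stationary phase reduction.
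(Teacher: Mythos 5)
Your proposal follows essentially the same route as the paper: realize $\GGG_f|_{\GG_m}$ as $FT_\psi(\FF_f)$ via Lemma \ref{fourier}, feed the tame decomposition $\bigoplus\bar\chi(a_d)^{deg}\otimes\LL_\chi$ of $\FF_f$ at infinity into Laumon's local Fourier transform $LFT^{(\infty,0)}$, use its explicit action on tame characters (inversion $\LL_\chi\mapsto\LL_{\bar\chi}$ plus a Gauss-sum unramified twist, with the unramified factor pulled through by the projection formula), and reindex by $\chi\mapsto\bar\chi$. The one step you flag as delicate — the exact scalar $\chi(a_d)g(\bar\chi,\psi)$ — is handled in the paper exactly as you suggest, by citing Laumon's Proposition 2.5.3.1 together with a remark on the uniformizer conventions ($V_{\bar\chi}$ versus $V'_\chi$), so your argument is correct and matches the paper's proof.
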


{\bf Proof.} By (\cite[Proposition 2.5.3.1]{laumon},\cite[Theorem 7.5.4]{esde}), the local monodromy at $0$ of ${\mathcal
G}_f$ can be read from the local monodromy at infinity of ${\mathcal
F}_f$. More precisely, we have $LFT^{(\infty,0)}(\bigoplus\bar\chi(a_d)^{deg}\otimes\LL_\chi)=\bigoplus LFT^{(\infty,0)}(\bar\chi(a_d)^{deg}\otimes\LL_\chi)$. Now, for every $\chi$, since the Fourier transform commutes with tensoring by an unramified sheaf (by the projection formula, since $\pi_1^\star(\alpha^{deg})=\alpha^{deg}$ and $\mu^\star(\alpha^{deg})=\alpha^{deg}$ for $\pi_1$ and $\mu:\AAA^1_k\times\AAA^1_k\to\AAA^1_k$ the projection and multiplication) we have $LFT^{(\infty,0)}(\bar\chi(a_d)^{deg}\otimes\LL_\chi)=\bar\chi(a_d)^{deg}\otimes LFT^{(\infty,0)}\LL_\chi=\bar\chi(a_d)^{deg}\otimes g(\chi,\psi)^{deg}\otimes\LL_{\bar\chi}$ by \cite[Proposition 2.5.3.1]{laumon} (note that $\LL_\chi$ corresponds to $V_{\bar\chi}$ as a representation of $D_\infty$ and to $V'_\chi$ as a representation of $D_0$ in the notation of \cite{laumon} due to the choice of uniformizers). \hfill$\Box$

For simplicity, we will assume from now on that $f'$ is square-free and $p>2$. Suppose that $k$ contains all roots of $f'$ (and therefore all critical values of $f$). Let $s\in k$ be a critical value of $f$. The polynomial $f_s:=f-s$ has at worst double roots and $k$ contains all its double roots. Let $g_s$ be the square-free part of $f_s$ (i.e. $f_s$ divided by the product of all its monic double linear factors), which lies in $k[x]$.
Let $S_0$ be the henselization of $\AAA^1_k$ at $s$, $z_1,\ldots,z_e\in k$ the double roots of $f_s$, $S_j$ the henselization of $\AAA^1_k$ at $z_j$ for $j=1,\ldots,e$ and $T$ the union of the henselizations of $\AAA^1_k$ at the closed points of the subscheme defined by $g_s=0$. We have a cartesian diagram
$$
\begin{CD}
( \coprod_j S_j)\coprod T @>>> \AAA^1_k   \\
@VV (\coprod_j h_j)\coprod h V @VVfV \\
S_0 @>>> \AAA^1_k
\end{CD}
$$
where the map $h_j:S_j\to S_0$ is isomorphic (for the \'etale topology) to the map $x\mapsto b_j(x-z_j)^2$ (where $b_j$ is $f_s(x)/(x-z_j)^2$ evaluated at $z_j$, that is, $f''(z_j)/2$) via the change of variable mapping the local coordinate $x-z_j$ to $\alpha(x-z_j)$, where $\alpha\in S_j$ is a square root of $f_s(x)/b_j(x-z_j)^2$ (which exists by Hensel's lemma, since its image in the residue field $k$ is $1$), and $h:T\to S_0$ is finite \'etale. In particular, the decomposition group $D_s$ at $s$ acts on the generic stalk of $f_\star\QQ$ through the direct sum $\bigoplus_j ({\mathbf 1}\oplus(b_j)_\star\LL_\rho)\bigoplus L=\bigoplus_j(\rho(b_j)^{deg}\otimes\LL_\rho)\bigoplus(e\cdot{\mathbf 1}\oplus L)$ where $L$ is unramified and $\rho=\bar\rho:k^\star\to\QQ^\star$ is the quadratic character.

\begin{prop}\label{monodromyinf} Suppose that $p>2$, $f'$ is square-free and all its roots are in $k$. The action of the decomposition group $D_\infty$
at infinity on ${\mathcal G}_f$ splits as a direct sum
$\bigoplus_z(\rho(b_z)g(\rho,\psi))^{deg}\otimes\LL_\rho\otimes\LL_{\psi_{f(z)}}$ where the sum is taken over the roots of $f'$, $b_z=f''(z)/2$, $\rho:k^\star\to\QQ^\star$ is the quadratic character and $g(\rho,\psi)=-\sum_t\rho(t)\psi(t)$ the corresponding Gauss sum.
\end{prop}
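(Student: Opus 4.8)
The plan is to read off the local monodromy of $\GGG_f$ at $\infty$ by transporting, through Laumon's local Fourier transform theory, the local monodromy of $\FF_f$ at its finite singularities (the critical values of $f$) to infinity. By Lemma \ref{fourier} we have $j_!\GGG_f[1]=FT_\psi(\FF_f[1])$, so it suffices to compute the action of the inertia group $I_\infty$ (hence of $D_\infty$) on $FT_\psi(\FF_f)$ at infinity. This is exactly complementary to Proposition \ref{monodromy0}, which recovered the monodromy of $\GGG_f$ at $0$ from the monodromy of $\FF_f$ at $\infty$ via $LFT^{(\infty,0)}$.

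First I would invoke Laumon's principle of stationary phase (\cite[2.3.3.1]{laumon}; see also \cite[7.4.1]{esde}): the local monodromy of $FT_\psi(\FF_f)$ at $\infty$ splits as the direct sum, over the finite singular points $s$ of $\FF_f$, of the local Fourier transforms $LFT^{(s,\infty)}$ of the local monodromy $\FF_f(s)$, together with a term $LFT^{(\infty,\infty)}$ coming from the slope-$>1$ part of $\FF_f(\infty)$. Since $\FF_f$ is tame at infinity --- its monodromy there being the sum of the Kummer sheaves $\LL_\chi$ with $\chi^d={\mathbf 1}$ computed above, all of slope $0$ --- there is no slope-$>1$ part, so the $LFT^{(\infty,\infty)}$ term vanishes and the tame part at $\infty$ contributes only to the monodromy at $0$ (as already exploited in Proposition \ref{monodromy0}). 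Hence the monodromy of $\GGG_f$ at $\infty$ is accounted for entirely by the finite critical values.

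Next, for each critical value $s=f(z)$ I would reduce $LFT^{(s,\infty)}$ to $LFT^{(0,\infty)}$ by translation: translating the source by $s$ multiplies the Fourier transform by the Artin--Schreier sheaf $\LL_{\psi_s}$, so that $LFT^{(s,\infty)}(\mathcal M)=\LL_{\psi_s}\otimes LFT^{(0,\infty)}(\tau_s^\star\mathcal M)$. The ramified part of $\FF_f(s)$ was computed just before the statement to be $\bigoplus_{z:\,f(z)=s}\rho(b_z)^{deg}\otimes\LL_\rho$ (the unramified summand $e\cdot{\mathbf 1}\oplus L$, as well as the constant summand removed in passing from $f_\star\QQ$ to $\FF_f$, are killed by $LFT^{(0,\infty)}$). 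Because Fourier transform commutes with tensoring by the geometrically constant sheaf $\rho(b_z)^{deg}$ (projection formula, exactly as in Proposition \ref{monodromy0}), and because $LFT^{(0,\infty)}(\LL_\rho)=g(\rho,\psi)^{deg}\otimes\LL_{\bar\rho}=g(\rho,\psi)^{deg}\otimes\LL_\rho$ by \cite[Proposition 2.5.3.1]{laumon} (using $\bar\rho=\rho$, as $\rho$ is quadratic), each critical point $z$ contributes the summand $(\rho(b_z)g(\rho,\psi))^{deg}\otimes\LL_\rho\otimes\LL_{\psi_{f(z)}}$. Summing over all roots $z$ of $f'$ gives the asserted decomposition.

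The points requiring care --- rather than one single hard obstacle --- are bookkeeping: confirming that the unramified and removed-constant parts of $\FF_f(s)$ genuinely contribute nothing at $\infty$, pinning down the translation/twist sign so that the Artin--Schreier character is $\LL_{\psi_{f(z)}}$ and not its inverse, and matching the Gauss-sum and character normalizations of Laumon's $LFT^{(0,\infty)}$ with those already fixed in Proposition \ref{monodromy0}. The self-duality of $\rho$ is what makes the character bookkeeping collapse cleanly.
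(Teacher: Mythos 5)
Your proposal is correct and follows essentially the same route as the paper's proof: Laumon's stationary phase (in the form of \cite{laumon} and \cite[Theorem 7.5.4]{esde}) to transport the local monodromy of $\FF_f$ at the critical values to infinity, vanishing of the slope $>1$ part by tameness of $\FF_f$ at infinity, the twist by $\LL_{\psi_s}$ from translating each critical value to $0$, and the computation $LFT^{(0,\infty)}(\rho(b_z)^{deg}\otimes\LL_\rho)=\rho(b_z)^{deg}\otimes g(\rho,\psi)^{deg}\otimes\LL_\rho$ via \cite[2.5.3.1]{laumon} and the projection formula. The only difference is expository: you make explicit the translation identity $LFT^{(s,\infty)}(\mathcal{M})=\LL_{\psi_s}\otimes LFT^{(0,\infty)}(\tau_s^\star\mathcal{M})$ and the fact that the unramified summands are killed, which the paper absorbs into the phrase ``modulo its $I_s$-invariant space.''
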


{\bf Proof.} By (\cite{laumon},\cite[Theorem 7.5.4]{esde}), the local monodromy at infinity of ${\mathcal
G}_f$ can be read from the local monodromies of ${\mathcal
F}_f$. More precisely, the part of slope $>1$ corresponds to the slope $>1$ part of the local monodromy at infinity of $\FF_f$, so it vanishes. The part of slope $\leq 1$ is a direct sum, over all critical values $s$ of $f$, of ${\mathcal L}_{\psi_{s}}$ tensored with the local Fourier transform $LFT^{(0,\infty)}$ applied to the action of $I_s$ on the generic stalk of $\FF_f$ modulo its $I_s$-invariant space. 

Using \cite[2.5.3.1]{laumon} and the fact that Fourier transform commutes with tensoring by unramified sheaves, for every root $z$ of $f'$ $LFT^{(0,\infty)}(\rho(b_z)^{deg}\otimes\LL_\rho)=\rho(b_z)^{deg}\otimes g(\rho,\psi)^{deg}\otimes\LL_\rho$. So each critical value $s$ contributes a factor $\bigoplus_{f(z)=s}(\rho(b_z)g(\rho,\psi))^{deg}\otimes\LL_\rho\otimes\LL_{\psi_s}$ to the monodromy of $\GGG_f$ at infinity.
\hfill$\Box$

\bigskip

We can now compute the determinant of ${\mathcal G}_f$:

\begin{cor}\label{determinant} Suppose that $k$ contains all $d$-th roots of unity.
If $d$ is odd, the determinant of ${\mathcal G}_f$ is the Tate-twisted Artin-Schreier
sheaf ${\mathcal L}_{\psi_s}((1-d)/2)$, where $s=s_1+\cdots+s_{d-1}$ is the sum of the critical values of $f$ 
and $\psi_s(t)=\psi(s t)$. If $d$ is even, the determinant of
${\mathcal G}_f$ is
${\mathcal L}_\rho\otimes{\mathcal L}_{\psi_s}\otimes(\epsilon\rho(a_d)g(\rho,\psi))^\mathrm{deg}((2-d)/2)$,
where $\rho$ is the multiplicative character of order $2$, $g(\rho,\psi)=-\sum_t\rho(t)\psi(t)$
is the corresponding Gauss sum, $a_d$ is the leading coefficient of $f$, $\epsilon=1$
if $d\equiv 0$ or $2\mod 8$ and $\epsilon=(-1)^{(q-1)/d}$ if $d\equiv 4$ or $6\mod 8$.
\end{cor}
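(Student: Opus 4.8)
The plan is to obtain $\det\GGG_f$ as a rank one lisse sheaf on $\GG_{m,k}$ by forming the top exterior power of the two explicit local monodromy representations already computed in Propositions \ref{monodromy0} and \ref{monodromyinf}. Since $\GGG_f$ is lisse of rank $d-1$ on $\GG_m$, tame at $0$, with its wild ramification concentrated at $\infty$, its determinant is a rank one sheaf of the same type, and such a sheaf is pinned down by its tame character and unramified Frobenius scalar at $0$ together with its Artin--Schreier part at $\infty$. I will read the first two off the determinant of the local monodromy at $0$, and the last off the determinant at $\infty$; purity forces $\det\GGG_f$ to be pure of weight $d-1$, which fixes the Tate twist.

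At $0$, taking determinants in Proposition \ref{monodromy0} shows that inertia acts through $\LL_{\prod_\chi\chi}$ and that there is an unramified twist by $\beta:=\prod_{\chi^d=\mathbf 1,\,\chi\neq\mathbf 1}\chi(a_d)\,g(\bar\chi,\psi)$. The product of all nontrivial characters of order dividing $d$ equals $\rho^{d-1}$ (trivial if $d$ is odd, the quadratic character $\rho$ if $d$ is even), which simultaneously gives the Kummer part $\LL_{\rho^{d-1}}$ and the factor $\prod_\chi\chi(a_d)=\rho^{d-1}(a_d)$. At $\infty$, the determinant of the decomposition in Proposition \ref{monodromyinf} produces the Artin--Schreier part $\bigotimes_z\LL_{\psi_{f(z)}}=\LL_{\psi_s}$ with $s=\sum_{f'(z)=0}f(z)$ the sum of the critical values. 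Because $\LL_{\psi_s}$ is unramified and trivial at the point $0$, it does not disturb the scalar read there, so I can assemble $\det\GGG_f=\LL_{\rho^{d-1}}\otimes\LL_{\psi_s}\otimes\beta^{\mathrm{deg}}$, and it remains only to evaluate $\beta$.

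The evaluation of $\beta$ is the arithmetic heart of the proof. After reindexing $\prod_\chi g(\bar\chi,\psi)=\prod_\chi g(\chi,\psi)$, I pair each $\chi$ with $\chi^{-1}$ and use $g(\chi,\psi)\,g(\chi^{-1},\psi)=\chi(-1)\,q$. For $d$ odd every such $\chi$ has odd order, so $\chi(-1)=1$ and the product collapses to $q^{(d-1)/2}$, matching the twist $((1-d)/2)$ and giving $\det\GGG_f=\LL_{\psi_s}((1-d)/2)$. For $d$ even the self-dual character $\rho$ is unpaired and contributes a single Gauss sum $g(\rho,\psi)$, while the remaining $(d-2)/2$ pairs contribute $q^{(d-2)/2}\prod\chi(-1)$; writing $\chi=\chi_0^a$ for a generator $\chi_0$ of order $d$ and using $\chi_0(-1)=(-1)^{(q-1)/d}$, the residual sign is $\prod_{a=1}^{d/2-1}\chi_0(-1)^a=\big((-1)^{(q-1)/d}\big)^{\binom{d/2}{2}}$, which equals $\epsilon$ once the parity of $\binom{d/2}{2}$ is sorted out across the residues of $d$ modulo $8$.

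The main obstacle is precisely this last Gauss--sum bookkeeping in the even case: identifying $\chi_0(-1)=(-1)^{(q-1)/d}$ (the element of order two lies in the $d$-th powers exactly when $(q-1)/d$ is even) and then tracking the parity of $\binom{d/2}{2}$ to recover the four cases $d\equiv 0,2,4,6\pmod 8$ in the definition of $\epsilon$. The structural reductions and the odd case are routine; once the even sign is pinned down, matching the surviving power of $q$ with the Tate twist through $(q^{a})^{\mathrm{deg}}=(-a)$ and the weight-$(d-1)$ purity of $\det\GGG_f$ completes the identification.
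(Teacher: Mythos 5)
Your proposal is correct and takes essentially the same route as the paper's proof: both read the determinant's local structure at $0$ and $\infty$ from Propositions \ref{monodromy0} and \ref{monodromyinf}, pin down the global rank-one sheaf by rigidity (the paper phrases your classification step as: the determinant twisted by the inverse of its expected Kummer and Artin--Schreier parts is everywhere unramified, hence geometrically constant, so it equals an unramified twist read off from Frobenius at $0$), and evaluate the scalar by pairing conjugate characters via $g(\chi,\psi)g(\bar\chi,\psi)=\chi(-1)q$. Your sign exponent $\binom{d/2}{2}$ is exactly the paper's $d(d-2)/8$, and the parity analysis modulo $8$ you sketch is precisely how the paper computes $\epsilon$.
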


{\bf Proof.} The determinant of ${\mathcal G}_f$ is a smooth sheaf
of rank one on $\GG_{m,k}$. At $0$, it is isomorphic by proposition \ref{monodromy0} to the product $\bigotimes_{\chi^d={\mathbf 1}, \chi\not={\mathbf 1}} \left((\chi(a_d)g(\bar\chi,\psi))^{deg}\otimes{\mathcal
L}_\chi\right)$. For any $\chi$ we have $\left((\chi(a_d)g(\bar\chi,\psi))^{deg}\otimes{\mathcal
L}_\chi\right)\otimes\left((\bar\chi(a_d)g(\chi,\psi))^{deg}\otimes{\mathcal
L}_{\bar\chi}\right)=(g(\bar\chi,\psi)g(\chi,\psi))^{deg}=(\chi(-1)q)^{deg}$. If $d$ is odd, the non-trivial characters with $\chi^d= {\mathbf 1}$ can be grouped in conjugate pairs. Moreover, $\chi(-1)=\chi((-1)^d)=\chi^d(-1)=1$. We conclude that the determinant at $0$ is the unramified character $(q^{\frac{d-1}{2}})^{deg}=\QQ(\frac{1-d}{2})$. At infinity, it is geometrically isomorphic by proposition \ref{monodromyinf} to the
product $\bigotimes_z(\LL_\rho\otimes\LL_{\psi_{f(z)}})=\LL_{\psi_s}$ (the hypothesis that $k$ contains all roots of $f'$ is not needed for the \emph{geometric} isomorphism, since it is always satisfied in a sufficiently large finite extension of $k$). So $\det(\GGG_f)\otimes\LL_{\psi_{-s}}$ is everywhere unramified and therefore geometrically constant. Looking at the Frobenius action at $0$, it must be $\QQ(\frac{1-d}{2})$, so $\det(\GGG_f)=\LL_{\psi_s}(\frac{1-d}{2})$.

If $d$ is even, the factor at $0$ corresponding to the quadratic character $\rho$ stays unmatched, so as a representation of $D_0$ the determinant is $(\epsilon q^{\frac{d-2}{2}})^{deg}\otimes(\rho(a_d)g(\rho,\psi))^{deg}\otimes\LL_\rho$, where $\epsilon=\prod_{i=1}^{(d-2)/2}\chi^i(-1)$ for a fixed character $\chi$ of exact order $d$. At $\infty$ it is geometrically isomorphic to $\LL_\rho\otimes\LL_{\psi_s}$, so $\det(\GGG_f)\otimes\LL_\rho\otimes\LL_{\psi_{-s}}$ is everywhere unramified and therefore geometrically constant. Looking at the Frobenius action at $0$, it must be $(\epsilon q^{\frac{d-2}{2}}\rho(a_d)g(\rho,\psi))^{deg}$, so $\det(\GGG_f)=\LL_{\rho}\otimes\LL_{\psi_s}\otimes(\epsilon\rho(a_d)g(\rho,\psi))^{deg}(\frac{2-d}{2})$.

It remains to compute the value of $\epsilon$. We have 
$$\epsilon=\prod_{i=1}^{(d-2)/2}\chi^i(-1)=\chi^{d(d-2)/8}(-1)=\chi((-1)^{d(d-2)/8}).$$
If $d\equiv 0$ or $2\mod 8$, $d(d-2)/8$ is even and therefore
$\epsilon=1$. If $d\equiv 4$ or $6\mod 8$, $d(d-2)/8$ is odd so
$\epsilon=\chi(-1)=(-1)^{(q-1)/d}$.\hfill$\Box$

\section{The moment $L$-function of ${\mathcal G}_f$.}

 Recall the definition \cite{FW3} of the moment $L$-function for the sheaf ${\mathcal G}_f$. For a fixed $r\geq 1$, let
$$
L^r(f,\psi,T):=\prod_{t\in |\GG_{m,k}|}{1\over \det(1-\mathrm{Frob}^r_tT^{\deg(t)}|(\GGG_f)_t)},
$$
where $|\GG_{m,k}|$ denotes the set of closed points of $\GG_{m,k}$. 

It is known (\cite[Theorem 1.1]{FW3}) that $L^r(f,\psi,T)$ is a rational function, and we have the formula
\begin{equation}\label{lfunction}
L^r(f,\psi,T)=\frac{\det(1-\mathrm{Frob}_kT|\HH^1_c(\GG_{m,\bar k},[\GGG_f]^r))}{\det(1-\mathrm{Frob}_kT|\HH^2_c(\GG_{m,\bar k},[\GGG_f]^r))}=
\end{equation}
$$
=\frac{\prod_{i=0}^r \det(1-\mathrm{Frob}_kT|\HH^1_c(\GG_{m,\bar k},
\Sym^{r-i}\GGG_f\otimes\wedge^i\GGG_f))^{(-1)^{i-1}(i-1)}}{\prod_{i=0}^r
\det(1-\mathrm{Frob}_kT|\HH^2_c(\GG_{m,\bar
k},\Sym^{r-i}\GGG_f\otimes\wedge^i\GGG_f))^{(-1)^{i-1}(i-1)}}.
$$
Thus, we get a decomposition

\begin{equation}\label{lfunctiondecomposition}
L^r(f,\psi,T)={Q(T)P_0(T)P_\infty(T)\over P(T)P'(T)}.
\end{equation}
We now describe each of the factors in this decomposition.

First,
$$
Q(T)=\prod_{i=0}^r \det(1-\mathrm{Frob}_kT|\HH^1(\PP^1_{\bar k},j_\star(\Sym^{r-i}\GGG_f\otimes\wedge^i\GGG_f)))^{(-1)^{i-1}(i-1)}
$$
is the non-trivial factor. Notice that the dual of $\GGG_f$ is $\GGG_{-f}(1)$, since $\FF_f$ is self-dual and $D\circ FT_\psi=FT_{\bar\psi}\circ D (1)$ \cite[Corollaire 2.1.5]{katz-laumon} and $FT_{\bar\psi}\FF_f[1]=[t\mapsto -t]^\star\GGG_{f}[1]=\GGG_{-f}[1]$. Therefore the dual of $\Sym^{r-i}\GGG_f\otimes\wedge^i\GGG_f$ is $\Sym^{r-i}\GGG_{-f}\otimes\wedge^i\GGG_{-f}(r)$, so the dual (in the derived category) of $j_\star\Sym^{r-i}\GGG_f\otimes\wedge^i\GGG_f[1]$ is $j_\star\Sym^{r-i}\GGG_{-f}\otimes\wedge^i\GGG_{-f}[1](r+1)$, cf. \cite[2.1]{dualite}. Since $\PP^1$ is proper, by \cite[Th\'eor\`eme 2.2]{dualite} we get a perfect pairing
$$
\HH^1(\PP^1_{\bar k},j_\star(\Sym^{r-i}\GGG_f\otimes\wedge^i\GGG_f))\times\HH^1(\PP^1_{\bar k},
j_\star(\Sym^{r-i}\GGG_{-f}\otimes\wedge^i\GGG_{-f}))\mapsto \QQ(-r-1)
$$
for every $i=0,\ldots,r$.

 In particular,
we get a functional equation relating the polynomial
$$Q_i(T):={\det(1-\mathrm{Frob}_kT|\HH^1(\PP^1_{\bar k},
j_\star(\Sym^{r-i}\GGG_f\otimes\wedge^i\GGG_f)))}=\prod_{j=1}^{s_i}(1-\gamma_{ij}
T).$$
and the corresponding polynomial $Q_i^\star(T)$ for $-f$. The functional equation is given by 
$$
{Q_i^\star(T)}=\prod_{j=1}^{s_i}(1-q^{r+1}\gamma_{ij}^{-1}T)=
$$
$$
=\frac{T^{s_i}q^{(r+1){s_i}}}{(-1)^{s_i}\gamma_{i1}\cdots\gamma_{is_i}}
\prod_{j=1}^{s_i}(1-\gamma_{ij}q^{-(r+1)}T^{-1})=\frac{T^{s_i}q^{(r+1){s_i}}}{c_{s_i}}Q_i(q^{-(r+1)}T^{-1})
$$
where $c_{s_i}$ is the leading coefficient of $Q_i(T)$. Therefore,
$$
{Q^\star(T)}:=\prod_{i=0}^r Q^\star_i(T)^{(-1)^{i-1}(i-1)}=\frac{T^sq^{(r+1)s}}{c_s}Q(q^{-(r+1)}T^{-1})
$$
where $s$ is the degree of the rational function $Q(T)$ and $c_s$ its leading coefficient (i.e. the ratio of the leading coefficients of the numerator and denominator). By \cite[Th\'eor\`eme 3.2.3]{De}, all reciprocal roots and poles of $Q(T)$ are pure Weil integers of weight $r+1$.

The other factors of $L^r(f,\psi,T)$ are the ``trivial factors":
$$
P(T)=\prod_{i=0}^r \det(1-\mathrm{Frob}_kT|\HH^0(\PP^1_{\bar k},j_\star(\Sym^{r-i}\GGG_f\otimes\wedge^i\GGG_f)))^{(-1)^{i-1}(i-1)}
$$
and
$$
P'(T)=\prod_{i=0}^r \det(1-\mathrm{Frob}_kT|\HH^2(\PP^1_{\bar k},j_\star(\Sym^{r-i}\GGG_f\otimes\wedge^i\GGG_f)))^{(-1)^{i-1}(i-1)}
$$
are rational functions of the same degree and pure of weight $r$ and $r+2$ respectively, and vanish if $\Sym^{r-i}\GGG_f\otimes\wedge^i\GGG_f$ has no invariants for the action of $\pi_1(\GG_{m,\bar k})$ for any $i$. The other two are the local factors at $0$:
$$
P_0(T):=\det(1-\Frob_0T|([{\mathcal G}_f]^{r})^{I_0})=
$$
$$
=\prod_{i=0}^r \det(1-\mathrm{Frob}_0T|(\Sym^{r-i}\GGG_f\otimes\wedge^i\GGG_f)^{I_0})^{(-1)^{i-1}(i-1)}
$$
and at infinity:
$$
P_\infty(T):=\det(1-\Frob_\infty T|([{\mathcal G}_f]^{r})^{I_\infty})=
$$
$$
=\prod_{i=0}^r \det(1-\mathrm{Frob}_\infty T|(\Sym^{r-i}\GGG_f\otimes\wedge^i\GGG_f)^{I_\infty})^{(-1)^{i-1}(i-1)}.
$$
We now compute the local factors explicitly. 

\begin{cor} \label{local0}\emph{(Local factor at $0$ of the moment $L$-function)}
Suppose that $k$ contains all $d$-th roots of unity. For any positive integer $r\geq 1$,
the local factor at $0$ of the $r$-th moment $L$-function for ${\mathcal G}_f$ is given by
$$
\det(1-\Frob_0T|([{\mathcal G}_f]^{r})^{I_0})=\prod_{\chi}(1-g(\chi,\psi)^rT)
$$
where the product is taken over all non-trivial characters $\chi$ of $k^\star$ such that
$\chi^e={\mathbf 1}$ for $e=\gcd(d,r)$ and $g(\chi,\psi)=-\sum_t\chi(t)\psi(t)$ is the corresponding Gauss sum.
\end{cor}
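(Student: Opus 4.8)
The plan is to read off the local factor directly from the explicit description of $\GGG_f$ as a $D_0$-representation in Proposition \ref{monodromy0}, exploiting that the Adams operation $[\,\cdot\,]^r$ is the power-sum (Newton) operation and hence acts on a direct sum of rank-one pieces by raising each to its $r$-th tensor power.

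First I would restrict to the decomposition group $D_0$. By Proposition \ref{monodromy0} the $D_0$-action on $\GGG_f$ is tame and semisimple, and
$$\GGG_f|_{D_0}=\bigoplus_{\chi^d=\mathbf 1,\ \chi\neq\mathbf 1}(\chi(a_d)g(\bar\chi,\psi))^{\deg}\otimes\LL_\chi$$
is a direct sum of $d-1$ rank-one pieces $M_\chi:=(\chi(a_d)g(\bar\chi,\psi))^{\deg}\otimes\LL_\chi$. Since $\Sym$, $\wedge$ and $\otimes$ are functorial they commute with restriction to $D_0$, so the same holds for the virtual sheaf $[\GGG_f]^r$. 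For a rank-one piece $M$ the defining formula collapses: $\wedge^i M=0$ for $i\geq2$, the $i=1$ term has coefficient $i-1=0$, and the $i=0$ term gives $\Sym^r M=M^{\otimes r}$ with coefficient $(-1)^{-1}(0-1)=1$, so $[M]^r=M^{\otimes r}$. Because the power-sum operation is additive, it distributes over the decomposition above, giving
$$[\GGG_f]^r|_{D_0}=\bigoplus_{\chi^d=\mathbf 1,\ \chi\neq\mathbf 1}M_\chi^{\otimes r}=\bigoplus_{\chi^d=\mathbf 1,\ \chi\neq\mathbf 1}((\chi(a_d)g(\bar\chi,\psi))^r)^{\deg}\otimes\LL_{\chi^r},$$
using $\LL_\chi^{\otimes r}=\LL_{\chi^r}$.

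Next I would take $I_0$-invariants. As the $I_0$-action factors through a finite quotient and the coefficients have characteristic $0$, the invariants functor is exact and distributes over the virtual sum. The piece indexed by $\chi$ survives exactly when $\LL_{\chi^r}$ is $I_0$-trivial, i.e. $\chi^r=\mathbf 1$; together with $\chi^d=\mathbf 1$ this is equivalent to $\chi^e=\mathbf 1$ for $e=\gcd(d,r)$. On a surviving piece $\Frob_0$ acts by the scalar $(\chi(a_d)g(\bar\chi,\psi))^r$, and here the key simplification is that $\chi^r=\mathbf 1$ forces $\chi(a_d)^r=\chi^r(a_d)=1$, so the scalar reduces to $g(\bar\chi,\psi)^r$. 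Hence
$$\det(1-\Frob_0T\mid([\GGG_f]^r)^{I_0})=\prod_{\chi^e=\mathbf 1,\ \chi\neq\mathbf 1}(1-g(\bar\chi,\psi)^rT),$$
and since $\chi\mapsto\bar\chi$ permutes the nontrivial characters with $\chi^e=\mathbf 1$, reindexing replaces $g(\bar\chi,\psi)$ by $g(\chi,\psi)$ and yields the claimed formula.

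The step that needs the most care is the identity $[\GGG_f]^r|_{D_0}=\bigoplus_\chi M_\chi^{\otimes r}$: one must know that the particular combination of symmetric and exterior powers defining $[\,\cdot\,]^r$ computes the additive power-sum, so that it genuinely distributes over the semisimple decomposition of Proposition \ref{monodromy0}, and that passing to $I_0$-invariants is exact here. Both follow from semisimplicity (Maschke, since the relevant quotients are finite and $\QQ$ has characteristic $0$) and the line-element computation $[M]^r=M^{\otimes r}$ above; everything else is bookkeeping with Gauss sums and the $\chi\leftrightarrow\bar\chi$ symmetry.
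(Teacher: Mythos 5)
Your proof is correct and follows essentially the same route as the paper's: both restrict to $D_0$ via Proposition \ref{monodromy0}, use additivity of the Adams operation together with the line-element identity $[M]^r=M^{\otimes r}$ to write $[\GGG_f]^r$ as a sum of the twisted Kummer sheaves $\LL_{\chi^r}$, identify the $I_0$-trivial summands as the nontrivial characters killed by $e=\gcd(d,r)$, and kill the factor $\chi(a_d)^r$ using $\chi^r=\mathbf 1$ to get the Gauss-sum eigenvalues. The only differences are cosmetic (you index over all nontrivial $\chi$ with $\chi^d=\mathbf 1$ rather than over powers of a fixed character of order $d$, and you spell out the semisimplicity/exactness points the paper leaves implicit).
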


{\bf Proof.} As a representation of the inertia group $I_0$, ${\mathcal G}_f$ is the direct sum
$\bigoplus_{i=1}^{d-1}{\mathcal L}_{\chi^i}$ for a character $\chi$ of order $d$. So in the
Grothendieck group of $\QQ[I_0]$-modules we have
$[{\mathcal G}_f]^r=\bigoplus_{i=1}^{d-1}{\mathcal L}_{\chi^i}^{\otimes r}=\bigoplus_{i=1}^{d-1}{\mathcal L}_{\chi^{ir}}$.
For a given $i$, ${\mathcal L}_{\chi^{ir}}$ is trivial as a representation of $I_0$ if and only if $\chi^{ir}$
is trivial, that is, if and only if $ir$ is a multiple of $d$.

Writing $d=d'e$ with $e=\gcd(d,r)$. The trivial summands correspond to $i=d',2d',\ldots,(e-1)d'$.
The characters $\chi^i$ are then exactly the non-trivial characters of $k^\star$ whose $e$-th power is trivial,
and the corresponding Frobenius eigenvalues are $(\chi^i(a_d)g(\bar\chi^i,\psi))^r=
\chi^{ir}(a_d)g(\bar\chi^i,\psi)^r=g(\bar\chi^i,\psi)^r$ by proposition \ref{monodromy0}.\hfill$\Box$
\bigskip

\begin{cor} \label{localinf}\emph{(Local factor at $\infty$ of the moment $L$-function)}
Suppose that $p>2$, $f'$ is square-free and all its roots are in $k$. For any positive integer $r\geq 1$,
the local factor at $\infty$ of the $r$-th moment $L$-function for ${\mathcal G}_f$ is given by
$$
\det(1-\Frob_{\infty} T|([{\mathcal G}_f]^{r})^{I_{\infty}})=
$$
$$
=\left\{
\begin{array}{ll}
(1-(\rho(-1)q)^{r/2}T)^{d-1} & \mbox{if }2p|r  \\
(1-(\rho(-1)q)^{r/2}T)^{m}  & \mbox{if $2|r$, $(r,p)=1$ and $f$ has double roots} \\
1 & \mbox{otherwise}
\end{array}
\right.
$$
where $\rho:k^\star\to\{1,-1\}$ is the quadratic character and $m$ is the number of double roots of $f$.
\end{cor}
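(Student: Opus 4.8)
The plan is to do everything over the decomposition group $D_\infty$ at infinity, where Proposition \ref{monodromyinf} already gives an explicit splitting of $\GGG_f$ into rank-one pieces. Write $Z$ for the set of the $d-1$ roots of $f'$ (all in $k$ by hypothesis). Proposition \ref{monodromyinf} exhibits $\GGG_f|_{D_\infty}=\bigoplus_{z\in Z} M_z$ with $M_z=(\rho(b_z)g(\rho,\psi))^{\deg}\otimes\LL_\rho\otimes\LL_{\psi_{f(z)}}$. Since each $M_z$ is a line, the $r$-th Adams operation is computed termwise by the identity $[\GGG_f]^r|_{D_\infty}=\bigoplus_{z\in Z} M_z^{\otimes r}$, where, using $\LL_\rho^{\otimes r}=\LL_{\rho^r}$ and $\LL_{\psi_{f(z)}}^{\otimes r}=\LL_{\psi_{rf(z)}}$, we have $M_z^{\otimes r}=(\rho(b_z)g(\rho,\psi))^{r\cdot\deg}\otimes\LL_{\rho^r}\otimes\LL_{\psi_{rf(z)}}$. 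The determinant in question then factors over $z\in Z$ as the product of the local factors of the lines $M_z^{\otimes r}$, and it remains, for each $z$, to decide whether $M_z^{\otimes r}$ is trivial as a representation of the inertia subgroup $I_\infty$ and, if so, to compute the Frobenius eigenvalue.

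The next step is the triviality criterion. The point is that $\LL_{\rho^r}$ is tame at infinity while $\LL_{\psi_{rf(z)}}$ is either trivial (when $rf(z)=0$ in $k$) or wildly ramified of slope $1$ (when $rf(z)\neq 0$); a wild character cannot be cancelled by a tame twist, so $M_z^{\otimes r}$ is $I_\infty$-trivial if and only if both factors are individually $I_\infty$-trivial. This happens exactly when $\rho^r=\mathbf{1}$, i.e. $r$ is even, and when $rf(z)=0$ in $k$, i.e. $p\mid r$ or $f(z)=0$. Here I would observe that, since $f'$ is square-free, a root $z$ of $f'$ satisfies $f(z)=0$ if and only if $z$ is a double root of $f$, so there are exactly $m$ such $z$.

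Assuming $M_z^{\otimes r}$ is trivial (in particular $r$ is even), the whole line is $I_\infty$-invariant and $\Frob_\infty$ acts through the unramified twist by $(\rho(b_z)g(\rho,\psi))^r$. Using that $r$ is even to get $\rho(b_z)^r=1$, together with the quadratic Gauss sum identity $g(\rho,\psi)^2=\rho(-1)q$ to get $g(\rho,\psi)^r=(\rho(-1)q)^{r/2}$, this eigenvalue equals $(\rho(-1)q)^{r/2}$ independently of $z$, so each invariant line contributes the factor $(1-(\rho(-1)q)^{r/2}T)$. Assembling the count of invariant lines gives the three cases: when $2p\mid r$ every $z\in Z$ qualifies, giving exponent $d-1$; when $2\mid r$ but $(r,p)=1$ only the double roots qualify, giving exponent $m$; and otherwise (either $r$ odd, or $r$ even with $(r,p)=1$ and no double roots) no line is invariant and the determinant is $1$. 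Note $p>2$ ensures $2\mid r$ and $p\mid r$ together force $2p\mid r$, so the cases are exhaustive and disjoint.

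The main obstacle, and really the only point requiring care, is the separation of tame and wild ramification underlying the triviality criterion: one must be certain that the tame Kummer factor $\LL_{\rho^r}$ can never combine with the wild Artin--Schreier factor $\LL_{\psi_{rf(z)}}$ to yield a trivial $I_\infty$-representation, which is precisely what forces each factor to be trivial on its own. The remaining inputs---the termwise behaviour of Adams operations on a sum of lines, the identification of the roots $z$ of $f'$ with $f(z)=0$ as the double roots of $f$, and the Gauss sum square identity---are routine, and the argument parallels the computation of the local factor at $0$ in Corollary \ref{local0}.
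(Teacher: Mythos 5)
Your proposal is correct and follows essentially the same route as the paper: decompose $\GGG_f$ at infinity into the rank-one pieces given by Proposition \ref{monodromyinf}, compute the Adams operation termwise as $r$-th tensor powers, use the tame/wild dichotomy to see that a term $\LL_{\rho^r}\otimes\LL_{\psi_{rf(z)}}$ is $I_\infty$-trivial exactly when $r$ is even and $rf(z)=0$, and evaluate the Frobenius eigenvalue as $g(\rho,\psi)^r=(\rho(-1)q)^{r/2}$. The only cosmetic difference is that you carry the unramified twists $(\rho(b_z)g(\rho,\psi))^{\deg}$ through the whole computation at the level of $D_\infty$, whereas the paper drops them when describing the $I_\infty$-action and reinstates them only to read off the Frobenius eigenvalues.
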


{\bf Proof.} As a representation of the inertia group $I_\infty$, ${\mathcal G}_f$ is the direct
sum $\bigoplus_{i=1}^{d-1}({\mathcal L}_{\psi_{s_i}}\otimes{\mathcal L}_\rho)$
where $\psi_{s_i}(t)=\psi(s_i t)$. So in the Grothendieck group of $\QQ[I_\infty]$-modules
we have $[{\mathcal G}_f]^r=\bigoplus_{i=1}^{d-1}({\mathcal L}_{\psi_{s_i}}^{\otimes r}
\otimes{\mathcal L}_\rho^{\otimes r})=\bigoplus_{i=1}^{d-1}({\mathcal L}_{\psi_{rs_i}}
\otimes{\mathcal L}_{\rho^r})$. The term $({\mathcal L}_{\psi_{rs_i}}\otimes{\mathcal L}_{\rho^r})$
is trivial if and only if $\rho^r$ and $\psi_{rs_i}$ are both trivial, that is, if and only if $r$ is even
and $rs_i=0$. That can only happen when either $r$ is divisible by $2p$ or $r$ is even and $s_i=0$.

In the first case the inertia group $I_\infty$ acts trivially on every term, and the Frobenius
eigenvalues are all equal to $(\pm g(\rho,\psi))^r=g(\rho,\psi)^r=(\rho(-1)q)^{r/2}$
by proposition \ref{monodromyinf}. In the second case, if $(r,p)=1$, the inertia group only
acts trivially on the $m$ terms for which $s_i=0$, and the corresponding Frobenius
eigenvalue is again $(\rho(-1)q)^{r/2}$.\hfill$\Box$

\bigskip

We now give some geometric conditions on $f$ that ensure that the trivial factors $P(T)$ and $P'(T)$ disappear:

\begin{prop}
 Suppose that $f'$ is square-free, and either:
\begin{enumerate}
 \item $r$ is odd, or
\item the hypersurface defined by $f(x_1)+\cdots+f(x_r)=0$ in $\AAA^r_k$ is non-singular. 
\end{enumerate}
Then $P(T)=P'(T)=1$.
\end{prop}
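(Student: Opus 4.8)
The plan is to reduce the statement to a purely local computation of inertia invariants at infinity. By the discussion preceding the proposition, both $P(T)$ and $P'(T)$ are trivial as soon as, for every $i$, the lisse sheaf $\mathcal{M}_i:=\Sym^{r-i}\GGG_f\otimes\wedge^i\GGG_f$ on $\GG_{m,\bar k}$ has no nonzero $\pi_1(\GG_{m,\bar k})$-invariants. Indeed $\HH^0(\PP^1_{\bar k},j_\star\mathcal{M}_i)=\mathcal{M}_i^{\pi_1(\GG_{m,\bar k})}$, while $\HH^2(\PP^1_{\bar k},j_\star\mathcal{M}_i)$ is dual, up to a Tate twist, to $\HH^0(\PP^1_{\bar k},j_\star\mathcal{M}_i^\vee)$, and the computation of the dual of $\GGG_f$ given above shows $\mathcal{M}_i^\vee=\Sym^{r-i}\GGG_{-f}\otimes\wedge^i\GGG_{-f}$ up to a twist. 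Since $-f$ satisfies the same hypotheses as $f$ (its derivative is again square-free, and the hypersurface $(-f)(x_1)+\cdots+(-f)(x_r)=0$ coincides with the hypersurface attached to $f$, hence is non-singular), it suffices to prove that $\mathcal{M}_i^{\pi_1(\GG_{m,\bar k})}=0$ for all $i$ and all $f$ satisfying the hypotheses. Alternatively, purity of $\GGG_f$ makes each $\mathcal{M}_i$ geometrically semisimple, so the invariants vanish precisely when the coinvariants do, and both $P$ and $P'$ are then controlled by the same vanishing.

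To prove the vanishing of invariants I would pass to the inertia group $I_\infty\subseteq\pi_1(\GG_{m,\bar k})$: since $\pi_1$-invariants are a fortiori $I_\infty$-invariants, it is enough to show $\mathcal{M}_i^{I_\infty}=0$. These invariants are geometric, so after replacing $k$ by a finite extension (which does not change them) we may assume all roots of $f'$ lie in $k$ and apply Proposition \ref{monodromyinf}: as a representation of $I_\infty$, $\GGG_f$ is the direct sum $\bigoplus_z(\LL_\rho\otimes\LL_{\psi_{f(z)}})$ of rank one pieces indexed by the roots $z$ of $f'$, the unramified Frobenius twists being irrelevant for the inertia action. Consequently $\Sym^{r-i}\GGG_f\otimes\wedge^i\GGG_f$ decomposes, as an $I_\infty$-representation, into rank one summands, each of the form
$$
\LL_{\rho^r}\otimes\LL_{\psi_s},\qquad s=f(z_1)+\cdots+f(z_r),
$$
where $(z_1,\ldots,z_r)$ runs through tuples of roots of $f'$, with the multiplicities dictated by the symmetric and exterior factors. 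Such a summand is $I_\infty$-trivial if and only if $\rho^r$ is trivial, i.e. $r$ is even, and $\LL_{\psi_s}$ is trivial, i.e. $s=0$.

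Now I would split into the two cases. If $r$ is odd, then $\rho^r=\rho$ is the nontrivial quadratic character, so no summand is $I_\infty$-trivial and $\mathcal{M}_i^{I_\infty}=0$. If instead the hypersurface $\{g=0\}$ with $g=f(x_1)+\cdots+f(x_r)$ is non-singular, I would invoke the Jacobian criterion: the singular points are the common zeros of $g$ and of all $\partial g/\partial x_j=f'(x_j)$, which (since $f'$ is square-free) are exactly the tuples $(z_1,\ldots,z_r)$ of roots of $f'$ with $\sum_j f(z_j)=0$. Non-singularity thus means that no sum $s=f(z_1)+\cdots+f(z_r)$ of critical values vanishes, so again every summand $\LL_{\rho^r}\otimes\LL_{\psi_s}$ is nontrivial and $\mathcal{M}_i^{I_\infty}=0$. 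In both cases $\mathcal{M}_i^{\pi_1(\GG_{m,\bar k})}=0$ for all $i$, whence $P(T)=P'(T)=1$.

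The step I expect to be the main obstacle is the translation, in the second paragraph, of the global geometric hypothesis into the local inertia data: one must check that every rank one constituent of $\Sym^{r-i}\GGG_f\otimes\wedge^i\GGG_f$ really has the stated shape $\LL_{\rho^r}\otimes\LL_{\psi_s}$ with $s$ a sum of $r$ critical values, and that the single non-singularity condition simultaneously excludes all such $s$ across all $i$. Once the $I_\infty$-decomposition is pinned down and matched with the Jacobian criterion, the remaining reductions (invariants to $\HH^0$, and $\HH^2$ via duality or semisimplicity) are formal.
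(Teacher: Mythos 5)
Your proof is correct and takes essentially the same approach as the paper's: both reduce the vanishing of $P(T)$ and $P'(T)$ to showing the $I_\infty$-invariants of $\Sym^{r-i}\GGG_f\otimes\wedge^i\GGG_f$ vanish, decompose that sheaf via Proposition \ref{monodromyinf} into rank-one characters $\LL_{\rho^r}\otimes\LL_{\psi_{f(z_1)+\cdots+f(z_r)}}$ indexed by $r$-tuples of roots of $f'$, and rule out the trivial character either because $r$ odd makes $\LL_{\rho^r}=\LL_\rho$ tame and nontrivial (hence never of Artin--Schreier type) or because the Jacobian criterion for the non-singularity hypothesis forces every sum $f(z_1)+\cdots+f(z_r)$ to be nonzero, making the character totally wild. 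The only cosmetic differences are that the paper embeds $\Sym^{r-i}\GGG_f\otimes\wedge^i\GGG_f$ into $\bigotimes^r\GGG_f$ instead of decomposing the symmetric and exterior factors directly, and that you make explicit the duality/semisimplicity handling of $\HH^2$ and the harmless finite base extension, which the paper leaves implicit.
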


\begin{proof}
 We will check that, for every $i=0,\ldots,r$, the action of $\pi_1(\GG_{m,\bar k})$ on the sheaf $\Sym^{r-i}\GGG_f\otimes\wedge^i\GGG_f$ has no non-zero invariants. Since $\Sym^{r-i}\GGG_f\otimes\wedge^i\GGG_f$ is a subsheaf of $\bigotimes^r\GGG_f$ for every $i$, it suffices to prove it for the latter.

By Proposition \ref{monodromyinf}, the inertia group $I_\infty$ acts on $\GGG_f$ through the direct sum of the characters $\LL_\rho\otimes\LL_{\psi_{f(z)}}$ for every root $z$ of $f'$. Therefore it acts on its $r$-th tensor power as the direct sum of the characters $\LL_{\rho^r}\otimes\LL_{\psi_{f(z_1)+\cdots+f(z_r)}}$ for all $r$-tuples $(z_1,\ldots,z_r)$ of roots of $f'$. If $r$ is odd, none of these is the trivial character, since $\LL_{\rho^r}=\LL_{\rho}$ (which is totally and tamely ramified at infinity) can not be isomorphic to $\LL_{\psi_t}$ (which is either trivial or totally wild at infinity) for any $t$. If the hypersurface $f(x_1)+\cdots+f(x_r)=0$ is non-singular, the sums $f(z_1)+\cdots+f(z_r)$ are always non-zero, and therefore $\LL_{\rho^r}\otimes\LL_{\psi_{f(z_1)+\cdots+f(z_r)}}$ is totally wild at infinity.

In either case, $\Sym^{r-i}\GGG_f\otimes\wedge^i\GGG_f$ has no non-zero invariants under the action of $I_\infty$ and, a fortiori, under the action of the larger group $\pi_1(\GG_{m,\bar k})$.
\end{proof}

\begin{cor}\label{bound}
 Let $f\in k[x]$ be a polynomial of degree $d$ prime to $p>2$ and $r$ a positive integer. Suppose that $f'$ is square-free. If $r$ is even, suppose additionally that the hypersurface defined by $f(x_1)+\cdots+f(x_r)=0$ in $\AAA^r_k$ is non-singular. Then the number $N_r(f)$ of $k_r$-rational points on the curve
$$
y^q-y=f(x)
$$
satisfies the estimate
$$
|N_r(f)-q^r|\leq C_{d,r} q^{\frac{r+1}{2}}
$$
where
$$
C_{d,r}=\sum_{i=0}^r |i-1|{{d-2+r-i}\choose{r-i}}{{d-1}\choose{i}}
$$
is independent of $q$.
\end{cor}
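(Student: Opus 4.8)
The plan is to read the estimate directly off the cohomological expression for $N_r(f)-q^r$ recorded in~(\ref{nr}). Write $\mathcal{H}_i:=\Sym^{r-i}\GGG_f\otimes\wedge^i\GGG_f$, so that $[\GGG_f]^r=\sum_{i=0}^r(-1)^{i-1}(i-1)\mathcal{H}_i$. Applying the Grothendieck--Lefschetz trace formula to this virtual sheaf on $\GG_{m,k}$ and using that each $\mathcal{H}_i$ is lisse on the affine curve $\GG_m$ (so $\HH^0_c(\GG_{m,\bar k},\mathcal{H}_i)=0$), equation~(\ref{nr}) becomes
$$N_r(f)-q^r=\Tr(\Frob_k\mid\HH^1_c(\GG_{m,\bar k},[\GGG_f]^r))-\Tr(\Frob_k\mid\HH^2_c(\GG_{m,\bar k},[\GGG_f]^r)).$$
Thus the proof splits into two independent tasks: showing that the $\HH^2_c$-term vanishes (this is the ``main term correction'' $\delta_{f,r}$ of the introduction), and bounding the $\HH^1_c$-term by $C_{d,r}q^{(r+1)/2}$.

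First I would dispose of the $\HH^2_c$-term. By Proposition~\ref{monodromyinf} the restriction $\GGG_f|_{I_\infty}$ is a direct sum of rank-one characters, so its symmetric and exterior powers, and hence $\mathcal{H}_i|_{I_\infty}$, are again direct sums of rank-one characters, each of the form $\LL_{\rho^r}\otimes\LL_{\psi_c}$ with $c$ a sum $f(z_1)+\cdots+f(z_r)$ over roots $z_j$ of $f'$. As established in the proof of the preceding proposition, under the hypotheses ($r$ odd, or the hypersurface $f(x_1)+\cdots+f(x_r)=0$ non-singular) every such character is non-trivial. A semisimple representation with no trivial constituent has neither invariants nor coinvariants, so $\mathcal{H}_i$ has no $I_\infty$-coinvariants, a fortiori no $\pi_1(\GG_{m,\bar k})$-coinvariants; since $\HH^2_c(\GG_{m,\bar k},\mathcal{H}_i)=(\mathcal{H}_i)_{\pi_1(\GG_{m,\bar k})}(-1)$, it vanishes for every $i$. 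Therefore $\HH^2_c(\GG_{m,\bar k},[\GGG_f]^r)=0$ and $\delta_{f,r}=0$.

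Next I would bound $\dim\HH^1_c(\GG_{m,\bar k},\mathcal{H}_i)$ via the Euler--Poincaré formula. By Proposition~\ref{monodromy0} the sheaf $\GGG_f$ is tame at $0$, hence so is $\mathcal{H}_i$, giving $\Sw_0(\mathcal{H}_i)=0$. Since $\chi_c(\GG_{m,\bar k})=0$ and both $\HH^0_c$ and $\HH^2_c$ of $\mathcal{H}_i$ vanish, Grothendieck--Ogg--Shafarevich yields
$$\dim\HH^1_c(\GG_{m,\bar k},\mathcal{H}_i)=-\chi_c(\GG_{m,\bar k},\mathcal{H}_i)=\Sw_0(\mathcal{H}_i)+\Sw_\infty(\mathcal{H}_i)=\Sw_\infty(\mathcal{H}_i).$$
By the description above, $\mathcal{H}_i|_{I_\infty}$ is a direct sum of $\mathrm{rank}(\mathcal{H}_i)$ characters $\LL_{\rho^r}\otimes\LL_{\psi_c}$, each of Swan conductor $1$ if $c\neq 0$ and $0$ if $c=0$; hence $\Sw_\infty(\mathcal{H}_i)\leq\mathrm{rank}(\mathcal{H}_i)=\binom{d-2+r-i}{r-i}\binom{d-1}{i}$, the right-hand side being the rank of $\Sym^{r-i}$ of a $(d-1)$-dimensional space times that of $\wedge^i$.

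Finally, each $\mathcal{H}_i$ is pure of weight $r$ (built from $r$ copies of the weight-one sheaf $\GGG_f$), so by Deligne's theorem every Frobenius eigenvalue on $\HH^1_c(\GG_{m,\bar k},\mathcal{H}_i)$ has absolute value at most $q^{(r+1)/2}$. Combining this with the displayed expression for $N_r(f)-q^r$ (whose $\HH^2_c$-term is now zero) and applying the triangle inequality to the virtual trace gives
$$|N_r(f)-q^r|\leq\sum_{i=0}^r|i-1|\dim\HH^1_c(\GG_{m,\bar k},\mathcal{H}_i)\,q^{\frac{r+1}{2}}\leq\Big(\sum_{i=0}^r|i-1|\binom{d-2+r-i}{r-i}\binom{d-1}{i}\Big)q^{\frac{r+1}{2}}=C_{d,r}q^{\frac{r+1}{2}}.$$
The main obstacle is conceptual rather than computational, and lies in the vanishing of the $\HH^2_c$-term: it is precisely here that the non-singularity hypothesis (for even $r$) is indispensable, and the delicate point is that one needs the absence of \emph{coinvariants}, not merely of invariants. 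This is supplied cleanly by the semisimplicity of the $I_\infty$-action together with the non-triviality of all its characters; everything else is the Euler--Poincaré count and Deligne's weight bound.
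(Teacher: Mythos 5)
Your proof is correct and follows essentially the same route as the paper's: vanishing of $\HH^2_c$ via the non-trivial $I_\infty$-characters coming from Proposition \ref{monodromyinf} together with the parity/non-singularity hypotheses, the Grothendieck--Ogg--Shafarevich count $\dim\HH^1_c=\Swan_\infty\leq\mathrm{rank}$ using tameness at $0$ and slopes $\leq 1$ at infinity, and Deligne's weight bound with the triangle inequality over the virtual decomposition $[\GGG_f]^r=\sum_i(-1)^{i-1}(i-1)\,\Sym^{r-i}\GGG_f\otimes\wedge^i\GGG_f$. Your explicit attention to coinvariants (rather than invariants) in identifying $\HH^2_c$, justified by the semisimplicity of the $I_\infty$-action, is if anything slightly more careful than the paper's own phrasing, but the substance is identical.
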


\begin{proof}
 Under the hypotheses of the corollary, the previous result shows that $\pi_1(\GG_{m,\bar k})$ has no non-zero invariants on $\Sym^{r-i}\GGG_f\otimes\wedge^i\GGG_f$. Therefore, $\HH^2_c(\GG_{m,\bar k},\Sym^{r-i}\GGG_f\otimes\wedge^i\GGG_f)=0$, and formula \ref{lfunction} reduces to
$$
L^r(f,\psi,T)=\prod_{i=0}^r \det(1-\mathrm{Frob}_kT|\HH^1_c(\GG_{m,\bar k},\Sym^{r-i}\GGG_f\otimes\wedge^i\GGG_f))^{(-1)^{i-1}(i-1)}
$$

In particular, by \ref{nr},
$$
N_r(f)-q^r=\sum_{i=0}^r (-1)^{i-1}(i-1)\cdot\Trace(\Frob_k|\HH^1_c(\GG_{m,\bar k},\Sym^{r-i}\GGG_f\otimes\wedge^i\GGG_f)).
$$

Since $\HH^1_c(\GG_{m,\bar k},\Sym^{r-i}\GGG_f\otimes\wedge^i\GGG_f)$ is mixed of weight $\leq r+1$, we get the estimate
$$
|N_r(f)-q^r|\leq \left(\sum_{i=0}^r| i-1| \cdot\dim (\HH^1_c(\GG_{m,\bar k},\Sym^{r-i}\GGG_f\otimes\wedge^i\GGG_f))\right)\cdot q^{\frac{r+1}{2}}.
$$

Since $\HH^1_c$ is the only non-zero cohomology group of $\Sym^{r-i}\GGG_f\otimes\wedge^i\GGG_f$, we have
$$
\dim \,\HH^1_c(\GG_{m,\bar k},\Sym^{r-i}\GGG_f\otimes\wedge^i\GGG_f)=-\chi(\GG_{m,\bar k},\Sym^{r-i}\GGG_f\otimes\wedge^i\GGG_f)=
$$
$$
=\Swan_\infty(\Sym^{r-i}\GGG_f\otimes\wedge^i\GGG_f)
$$
by the Grothendieck-N\'eron-Ogg-Shafarevic formula, since $\Sym^{r-i}\GGG_f\otimes\wedge^i\GGG_f$ is tamely ramified at $0$. Now by \ref{monodromyinf}, all slopes at infinity of $\Sym^{r-i}\GGG_f\otimes\wedge^i\GGG_f$ are $0$ or $1$, so
$$
 \Swan_\infty(\Sym^{r-i}\GGG_f\otimes\wedge^i\GGG_f)\leq
$$
$$
\leq\mathrm{rank}(\Sym^{r-i}\GGG_f\otimes\wedge^i\GGG_f)={{d-2+r-i}\choose{r-i}}{{d-1}\choose i}.
$$
The proof is complete. 
\end{proof}

The non-singularity condition is generic on $f$ if $r$ is not a
multiple of $p$: in fact, there can be at most ${d+r-2}\choose r$
values of $\lambda\in\bar k$ for which $f(x)+\lambda$ does not
satisfy the condition. If $r$ is divisible by $p$, then
$f(x_1)+\cdots +f(x_r)=0$ always defines a singular affine
hypersurface and thus Theorem \ref{introduction} is empty if $r$ is further even.
In such cases, we can use the refinement in next section.

\section{Refinements using global monodromy.}\label{refinements}

In this section we will relax the hypotheses of Corollary \ref{bound} using Katz's computation of the global monodromy of $\GGG_f$. In particular, we will give conditions on $f$ that make the given bound hold for any $r$.

Let $G=\overline{\pi_1(\GG_{m,\bar k})}^{Zar}\subseteq GL(V)$ be the geometric monodromy group of $\GGG_f$, where $V$ is its generic stalk. Let $z_1,\ldots,z_{d-1}$ be the roots of $f'$ in $\bar k$, let $s_i=f(z_i)$ and $s=s_1+\cdots+s_{d-1}$.

\begin{prop}\label{SL}
 Suppose that $p>2d-1$ and the $(d-1)(d-2)$ numbers $s_i-s_j$ for $i\neq j$ are all distinct. Then $G$ is given by
$$
\left\{
\begin{array}{ll}
SL(V) & \mbox{if $d$ is odd and $s=0$} \\
GL_p(V) & \mbox{if $d$ is odd and $s\neq 0$} \\
GL_2(V)=\pm SL(V) & \mbox{if $d$ is even and $s=0$} \\
GL_{2p}(V) & \mbox{if $d$ is even and $s\neq 0$}
\end{array}
\right.
$$
where $GL_m(V)=\{A\in GL(V)|\det(A)^m=1\}$.
\end{prop}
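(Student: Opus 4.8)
The plan is to determine the geometric monodromy group $G$ of $\GGG_f$ by combining three pieces of information: the local monodromy at $0$ and $\infty$ (already computed in Propositions \ref{monodromy0} and \ref{monodromyinf}), the determinant (Corollary \ref{determinant}), and Katz's classification theorems for monodromy groups of sheaves admitting a suitable local structure. The underlying strategy is Katz's: one first shows that $G$ acts irreducibly on $V$, then identifies $G$ among the short list of possible irreducible reductive groups by detecting a pseudoreflection or other distinguished element in the local inertia, and finally pins down $G$ inside its normalizer using the determinant. Let me sketch each step.

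First I would establish irreducibility of the representation of $\pi_1(\GG_{m,\bar k})$ on $V$. The natural tool is the local structure at infinity: by Proposition \ref{monodromyinf}, $I_\infty$ acts through the direct sum $\bigoplus_z \LL_\rho\otimes\LL_{\psi_{f(z)}}$ over the $d-1$ roots of $f'$. The hypothesis that the $(d-1)(d-2)$ differences $s_i-s_j$ are distinct means these $d-1$ characters are pairwise distinct as $I_\infty$-representations, so $\GGG_f$ is (as an $I_\infty$-module) multiplicity-free with all wild parts of slope $1$ and pairwise distinct breaks' ``directions''. This is exactly the kind of local input Katz uses to force irreducibility of the global monodromy and, more strongly, to produce a reflection or to rule out tensor-product and induced structures. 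I would invoke the relevant results from \cite{esde} (the classification of possible $G$ for sheaves that are tame at $0$ and totally wild with distinct slopes-one breaks at $\infty$) to conclude that the connected component $G^0$ is one of the classical groups $SL(V)$, $SO(V)$, or $Sp(V)$, or else $SL$ acting through a symmetric/exterior power.

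Next I would eliminate the orthogonal and symplectic alternatives by an autoduality computation. The sheaf $\GGG_f$ is self-dual up to twist only in the special geometric sense recorded in Section 3: the dual of $\GGG_f$ is $\GGG_{-f}(1)$, and $\GGG_{-f}$ is the pullback $[t\mapsto -t]^\star\GGG_f$ composed with replacing $\psi$, so $\GGG_f$ is \emph{not} geometrically self-dual for generic $f$ (precisely because the $s_i$ are in ``general position''). This forces $G^0=SL(V)$, the only classical group preserving no bilinear form. The precise statement that distinctness of the $s_i-s_j$ obstructs autoduality is where I expect to lean on Katz's criteria in \cite{monodromy}, together with the constraint $p>2d-1$, which guarantees that the symmetric-power degeneracies and small-characteristic exceptional cases in his tables do not occur.

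Finally I would cut $G$ down from $GL(V)$ to the stated group by reading off the determinant. Corollary \ref{determinant} shows $\det\GGG_f$ is geometrically $\LL_{\psi_s}$ (for $d$ odd) or $\LL_\rho\otimes\LL_{\psi_s}$ (for $d$ even), up to an unramified twist. When $s=0$ this determinant is geometrically trivial for $d$ odd, giving $G=SL(V)$; for $d$ even it is geometrically $\LL_\rho$, a character of order $2$, giving $G=\{A : \det(A)^2=1\}=\pm SL(V)=GL_2(V)$. When $s\neq 0$ the factor $\LL_{\psi_s}$ is wildly ramified at $\infty$ of order $p$, so $\det$ has image of order exactly $p$ on $\pi_1$, which forces $\det(A)^p=1$ on $G$ and hence $G=GL_p(V)$ for $d$ odd and $G=GL_{2p}(V)$ for $d$ even (the order-$2$ character $\LL_\rho$ and the order-$p$ character $\LL_{\psi_s}$ combining, since $\gcd(2,p)=1$, to an order-$2p$ determinant constraint). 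The main obstacle is the middle step: verifying that the distinctness hypothesis on the $s_i-s_j$ genuinely excludes \emph{every} proper irreducible subgroup of $SL(V)$ in Katz's classification — not merely the classical forms but also the tensor-induced and finite primitive cases — which is precisely what his monodromy theorems in \cite{monodromy} and \cite{esde} are designed to handle under the numerical condition $p>2d-1$.
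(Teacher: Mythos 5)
Your proposal shares the paper's skeleton---use Katz's classification machinery to force $SL(V)\subseteq G^0$, then read off $G$ from the determinant computed in Corollary \ref{determinant} (your final step is essentially identical to the paper's)---but the two middle reductions you propose are where the real content lies, and both have genuine gaps. First, irreducibility: you claim that the multiplicity-free decomposition of the $I_\infty$-action into the pairwise distinct characters $\LL_\rho\otimes\LL_{\psi_{s_i}}$ (Proposition \ref{monodromyinf}) ``forces irreducibility of the global monodromy''. It does not: a direct sum $\GGG_{f_1}\oplus\GGG_{f_2}$ for two polynomials with disjoint, generic sets of critical values has exactly this kind of multiplicity-free local structure at infinity and is visibly reducible. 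Irreducibility is a global statement; the paper obtains it on the other side of the Fourier transform, citing \cite[Lemma 7.10.2.3]{esde} (valid since $p>d$ and the hypothesis makes the $s_i$ distinct) to see that $\FF_f$ is a geometrically \emph{irreducible} tame reflection sheaf, and then using that Fourier transform preserves irreducibility.

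Second, and more seriously, your elimination of $Sp(V)$ and $SO(V)$ by arguing that $\GGG_f$ is not geometrically self-dual is a wrong inference: $G^0=Sp(V)$ forces the representation to be self-dual only \emph{up to a rank-one twist}, since the normalizer of $Sp(V)$ in $GL(V)$ is $\GG_m\cdot Sp(V)$, and it is the restriction to $G^0$, not to $G$, that must preserve a bilinear form. The paper's own Proposition \ref{Sp} is a counterexample to your step: for quasi-odd $f$ with $b\neq 0$ one has $\GGG_f^\vee\cong\GGG_f\otimes\LL_{\psi_{-b}}(1)$, so $\GGG_f$ is not geometrically self-dual (its determinant is geometrically $\LL_{\psi_s}$, of odd order $p$, whereas a self-dual representation has determinant of order at most $2$), and yet $G^0=Sp(V)$ and $G=\mu_p\cdot Sp(V)$. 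Your argument, if valid, would contradict that proposition. What actually excludes the symplectic and orthogonal cases is the full strength of the distinctness hypothesis: self-duality up to an Artin-Schreier twist $\LL_{\psi_c}$ would force the set $\{s_i\}$ to be stable under $s\mapsto c-s$, hence give a permutation $\pi$ with $s_i-s_j=s_{\pi(j)}-s_{\pi(i)}$ for all $i\neq j$, which is incompatible with all differences being distinct once $d-1\geq 3$ (and for $d-1=2$ one has $SL(V)=Sp(V)$ anyway). This exclusion is built into the hypotheses of \cite[Theorem 7.9.6]{esde}, which is what the paper invokes---applied to the tame reflection sheaf $\FF_f$, not to the local data of $\GGG_f$---to get $SL(V)\subseteq G$ in one step; semisimplicity of $G^0$ (\cite[Corollaire 1.3.9]{De}, using irreducibility) then gives $G^0=SL(V)$, and the determinant argument finishes as you describe.
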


\begin{proof}
 The hypothesis forces the $s_i$ to be distinct (otherwise $0$ would appear at least twice as a difference of two critical values). Since $p>d$, \cite[Lemma 7.10.2.3]{esde} shows that $\FF_f$ is a geometrically irreducible tame reflection sheaf. Then by \cite[Theorem 7.9.6]{esde}, $G$ must contain $SL(V)$. Since $SL(V)$ is connected, it must be contained in the unit connected component $G_0$ of $G$. On the other hand, since $\GGG_f$ is also geometrically irreducible (since Fourier transform preserves irreducibility), $G_0$ is a semisimple algebraic group \cite[Corollaire 1.3.9]{De} so it must be $SL(V)$. In order to determine $G$ completely, we only need to know the image of its determimant, but by Corollary \ref{determinant} we know it is trivial for $d$ odd and $s=0$ and the group of $p$-th roots (respectively square roots, $2p$-th roots) of unity for $d$ odd and $s\neq 0$ (resp. $d$ even and $s=0$, $d$ even and $s\neq 0$).
\end{proof}

\begin{cor}\label{estimateSL}
 Under the hypotheses of Proposition \ref{SL}, for any integer $r\geq 1$ the number $N_r(f)$ of $k_r$-rational points on the curve
$$
y^q-y=f(x)
$$
satisfies the estimate
$$
|N_r(f)-q^r|\leq C_{d,r} q^{\frac{r+1}{2}}
$$
where
$$
C_{d,r}=\sum_{i=0}^r |i-1|{{d-2+r-i}\choose{r-i}}{{d-1}\choose{i}},
$$
unless $d$ is odd, $s=0$ and $r=d-1$, in which case there exists $\beta=\pm 1$ such that $N_r(f)$ satisfies the estimate
$$
|N_r(f)-(q^r+\beta q^{\frac{r}{2}+1})|\leq C_{d,r} q^{\frac{r+1}{2}}.
$$
Moreover, if $k$ contains all $d$-th roots of unity then $\beta=1$.
\end{cor}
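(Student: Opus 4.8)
The plan is to rerun the cohomological computation behind Corollary \ref{bound}, but this time without forcing the $\HH^2_c$ factor to vanish; instead I will read off its exact contribution from the monodromy group $G$ supplied by Proposition \ref{SL}. By formula \ref{nr} and the Adams decomposition of $[\GGG_f]^r$, together with the Grothendieck--Lefschetz formula on $\GG_{m,\bar k}$ (where $\HH^0_c=0$), one has
$$N_r(f)-q^r=\Trace(\Frob_k\mid\HH^1_c(\GG_{m,\bar k},[\GGG_f]^r))-\Trace(\Frob_k\mid\HH^2_c(\GG_{m,\bar k},[\GGG_f]^r)),$$
where each cohomology of the virtual sheaf is $\sum_{i=0}^r(-1)^{i-1}(i-1)\HH^j_c(\GG_{m,\bar k},\Sym^{r-i}\GGG_f\otimes\wedge^i\GGG_f)$. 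The whole problem is to evaluate the $\HH^2_c$ term; the $\HH^1_c$ term will again be absorbed into $C_{d,r}q^{(r+1)/2}$.

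For the $\HH^1_c$ bound I would argue that, even when $\HH^2_c\ne 0$, the rank estimate survives. For each $i$ one has $\dim\HH^1_c=\Swan_\infty+\dim\HH^2_c$ (tameness at $0$ and $\HH^0_c=0$). By Proposition \ref{monodromyinf} all slopes at $\infty$ are $0$ or $1$, so $\Swan_\infty$ is the rank of the slope-$1$ part, while $\HH^2_c=(\Sym^{r-i}\GGG_f\otimes\wedge^i\GGG_f)_{\pi_1}(-1)$ has dimension at most that of the slope-$0$ part (the $\pi_1$-coinvariants are a subquotient of the $I_\infty$-tame part). Hence $\dim\HH^1_c\le\mathrm{rank}(\Sym^{r-i}\GGG_f\otimes\wedge^i\GGG_f)=\binom{d-2+r-i}{r-i}\binom{d-1}{i}$, and $\sum_i|i-1|\dim\HH^1_c\le C_{d,r}$ exactly as before.

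The heart of the matter is to compute, for each $i$, the multiplicity of the trivial $G$-representation in $W_i:=\Sym^{r-i}V\otimes\wedge^iV$, where $V$ is the generic stalk, together with the Frobenius eigenvalue on it. Since $SL(V)\subseteq G$ with $G/SL(V)\cong\mu_m$ ($m=1,p,2,2p$ in the four cases of Proposition \ref{SL}) detected by the determinant character, I first find the $SL(V)$-invariants: Pieri's rule gives $W_i=S^{(r-i+1,1^{i-1})}V\oplus S^{(r-i,1^i)}V$, a sum of two hook representations, and a hook is $SL(V)$-trivial only when it is a single column of height $n=d-1$. This happens exactly for $r=d-1$ at $i=d-2$ and $i=d-1$, in both cases a one-dimensional invariant line sitting inside the $\det V=\wedge^{d-1}V$ isotypic piece. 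On that line the whole center acts through $\det$, so passing to $\mu_m$-invariants annihilates it unless $m=1$. Therefore $\HH^2_c$ vanishes identically --- and the plain bound of Corollary \ref{bound} holds for every $r$ --- in all cases except $G=SL(V)$, which by Proposition \ref{SL} is precisely $d$ odd and $s=0$, and then only for $r=d-1$. I expect this invariant-theoretic dichotomy, and the bookkeeping of which isotypic component survives, to be the main obstacle.

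It remains to pin down the eigenvalue in the exceptional case. The surviving line in each of $W_{d-2},W_{d-1}$ is $\wedge^{d-1}\GGG_f=\det\GGG_f$, which for $d$ odd and $s=0$ is geometrically constant, say $\det\GGG_f=\alpha^{\deg}$; so $\HH^2_c$ contributes the single eigenvalue $\alpha q$ on each line. Summing the coefficients $(-1)^{i-1}(i-1)$ at $i=d-2,d-1$ (with $d$ odd) gives $\Trace(\Frob_k\mid\HH^2_c([\GGG_f]^{d-1}))=-\alpha q$, hence $N_r(f)-q^r=(\HH^1_c\text{ term})+\alpha q$ with $r=d-1$. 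Writing $\alpha=\beta q^{(d-1)/2}$ turns $\alpha q$ into $\beta q^{r/2+1}$ and yields the stated estimate. Finally, to see $\beta=\pm 1$ I would use $\GGG_f^\vee\cong\GGG_{-f}(1)$ together with $\GGG_{-f}=[t\mapsto-t]^\star\GGG_f$: since $\det\GGG_f$ is geometrically constant it is fixed by $[t\mapsto-t]^\star$, so $\det\GGG_{-f}=\alpha^{\deg}$ as well, and taking determinants in the duality forces $\alpha^{-1}=\alpha q^{-(d-1)}$, i.e. $\alpha^2=q^{d-1}$ and $\beta=\pm 1$. When $k$ contains all $d$-th roots of unity, Corollary \ref{determinant} gives $\det\GGG_f=\QQ((1-d)/2)$, so $\alpha=q^{(d-1)/2}$ and $\beta=1$.
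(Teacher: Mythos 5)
Your proposal is correct, and its skeleton is the same as the paper's: reduce to computing $G$-invariants of $\Sym^{r-i}V\otimes\wedge^iV$ via Proposition \ref{SL}; find that $SL(V)$-invariants occur only for $r=d-1$ and $i\in\{d-2,d-1\}$ (you via Pieri's rule and hook Schur functors, the paper via $\Hom(\wedge^{d-1-i}V,\Sym^{r-i}V)$ and irreducibility --- the same computation in different clothing); kill these invariants through the determinant character unless $m=1$ (you via the $\det$-isotypy of the invariant line, the paper via a scalar matrix generating $G/SL(V)$); identify the surviving Frobenius eigenvalue on $\HH^2_c$ as $\alpha q$ with $\det\GGG_f=\alpha^{\deg}$; and control the $\HH^1_c$ dimensions by the rank. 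The one genuinely different step is your proof that $\beta=\pm 1$: the paper argues that $L^r(f,\psi,T)$ has real coefficients (complex conjugation amounts to replacing $f$ by $-f$, i.e.\ pulling back $\GGG_f$ under $t\mapsto -t$, which leaves $L^r$ unchanged), so its unique weight-$(r+2)$ reciprocal pole $q^{\frac{r}{2}+1}\beta$ must be real; you instead take determinants in the arithmetic duality $\GGG_f^\vee\cong\GGG_{-f}(1)$ with $\GGG_{-f}=[t\mapsto-t]^\star\GGG_f$, using geometric constancy of $\det\GGG_f$ to get $\alpha^{-1}=\alpha q^{-(d-1)}$, hence $\alpha^2=q^{d-1}$. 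Both arguments are valid and of comparable length; yours is purely sheaf-theoretic and avoids appealing to rationality/reality of the moment $L$-function, while the paper's needs nothing about determinants of duals. A minor further difference in your favor: your uniform estimate $\dim\HH^1_c\le\mathrm{rank}$, obtained by bounding the $\pi_1$-coinvariants by the slope-zero part at infinity, packages cleanly what the paper handles by the ad hoc remark that the two exceptional sheaves have at least one slope $0$ at infinity, so that $\Swan_\infty+1\le\mathrm{rank}$.
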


\begin{proof}
 For the first statement, we only need to show that $\pi_1(\GG_{m,\bar k})$ has no non-zero invariants on $\Sym^{r-i}\GGG_f\otimes\wedge^i\GGG_f$ for any $i$, the result follows exactly as in Corollary \ref{bound}. Equivalently, we need to show that $\Sym^{r-i}V\otimes\wedge^i V$ has no non-zero invariants under the action of $G$.

As a representation of $SL(V)$, we have
$$
\Sym^{r-i}V\otimes\wedge^i V=\Hom(\wedge^i
V^\star,\Sym^{r-i}V)=\Hom(\wedge^{d-1-i} V,\Sym^{r-i}V)
$$
whose invariant subspace, for $i\geq 0$, is $0$ except in the cases $r-i=d-1-i=0$ and $r-i=d-1-i=1$, where it is one-dimensional. In particular, $SL(V)$ (and, a fortiori, $G$) has no non-zero invariants on $\Sym^{r-i}V\otimes\wedge^i V$ for any $i\geq 0$ if $r\neq d-1$.

Suppose that $r=d-1$, and let $W_i$ be the one-dimensional subspace of $\Sym^{r-i}V\otimes\wedge^i V$ invariant under $SL(V)$, for $i=r-1$ or $i=r$. The factor group $G/SL(V)=\mu_m$ acts on $W_i$, where $m$ is given in the previous Proposition. Let $A=\mathrm{diag}(\zeta,\ldots,\zeta)\in G$ be a scalar matrix, where $\zeta\in\QQ$ is a primitive $m(d-1)$-th root of unity. Then the class of $A$ generates the cyclic group $G/SL(V)$, so $G$ fixes $W_i$ if and only if $A$ does. But $A$ acts on $W_i$ by multiplication by $\zeta^r$, so this action is trivial if and only if $\zeta^r=1$, that is, if and only if $m(d-1)$ divides $r=d-1$, which can only happen for $m=1$, that is, in the case where $d$ is odd and $s=0$.

It remains to prove the second estimate in this case. Since $G=SL(V)$, the determinant of $\GGG_f$ is geometrically trivial, so it is $(q^{\frac{d-1}{2}}\beta)^{\deg}$ for some $\beta$ with $|\beta|=1$. For $i=r$, $\Sym^{r-i}V\otimes\wedge^i V=\wedge^{d-1}V=\det V$ and therefore Frobenius acts
by multiplication by $q^{\frac{d-1}{2}}\beta$.
For $i=r-1$, $\Sym^{r-i}V\otimes\wedge^i V=V\otimes\wedge^{d-2}V=\Hom(V,V)\otimes\det V$
and the $G$-invariant part is again $\det V$, on which Frobenius acts by multiplication
by $q^{\frac{d-1}{2}}\beta$. We conclude that
$$
\prod_{i=0}^r \det(1-\mathrm{Frob}_kT|\HH^2_c(\GG_{m,\bar
k},\Sym^{r-i}\GGG_f\otimes\wedge^i\GGG_f))^{(-1)^{i-1}(i-1)}
$$
$$
=\det(1-\mathrm{Frob}_kT|(\det\GGG_f)(-1))^{(-1)^{r-2}(r-2) + (-1)^{r-1}(r-1)}
$$
$$
=(1-q^{\frac{r}{2}+1}\beta T)^{(-1)^{r-1}}=(1-q^{\frac{r}{2}+1}\beta T)^{-1}
$$
since $r-1=d-2$ is odd.

From equation (\ref{lfunction}) we then get that
$$
\frac{L^r(f,\psi,T)}{ 1-q^{\frac{r}{2}+1} \beta T}= {\prod_{i=0}^r \det(1-\mathrm{Frob}_kT|\HH^1_c(\GG_{m,\bar k},\Sym^{r-i}\GGG_f\otimes\wedge^i\GGG_f)^{(-1)^{i-1}(i-1)}}
$$
and, in particular, by (\ref{nr})
$$
N_r(f)-q^r-\beta q^{\frac{r}{2}+1} =\sum_{i=0}^r (-1)^{i-1}(i-1)\cdot\Trace(\Frob_k|\HH^1_c(\GG_{m,\bar k},\Sym^{r-i}\GGG_f\otimes\wedge^i\GGG_f)).
$$

But $L^r(f,\psi,T)$ has real coefficients (since taking complex conjugate is the same as replacing $f$ by $-f$ or, equivalently, taking the pull-back of $\GGG_f$ under the automorphism $t\mapsto -t$, so it gives the same $L^r$). Since $q^{\frac{r}{2}+1}\beta$ is its only reciprocal root of weight $r+2$, we conclude that $\beta=\pm 1$. Moreover, if $k$ contains all $d$-th roots of unity then $\beta=1$ by Corollary \ref{determinant}.

Using that $\Sym^{r-i}\GGG_f\otimes\wedge^i\GGG_f$ is pure of weight $r$, we obtain the estimate
$$
|N_r(f)-(q^r+\beta q^{\frac{r}{2}+1})|\leq \left(\sum_{i=0}^r |i-1|\cdot\dim (\HH^1_c(\GG_{m,\bar k},\Sym^{r-i}\GGG_f\otimes\wedge^i\GGG_f))\right)\cdot q^{\frac{r+1}{2}}
$$
We conclude as in corollary \ref{bound} using that, for the two values of $i$ for which $\HH^2_c(\GG_{m,\bar k},\Sym^{r-i}\GGG_f\otimes\wedge^i\GGG_f)$ is one-dimensional, the sheaf $\Sym^{r-i}\GGG_f\otimes\wedge^i\GGG_f$ has at least one slope equal to $0$ at infinity, and therefore
$$\dim\HH^1_c(\GG_{m,\bar k},\Sym^{r-i}\GGG_f\otimes\wedge^i\GGG_f)
$$
$$
=-\chi(\GG_{m,\bar k},\Sym^{r-i}\GGG_f\otimes\wedge^i\GGG_f)+\dim\HH^2_c(\GG_{m,\bar k},\Sym^{r-i}\GGG_f\otimes\wedge^i\GGG_f)
$$
$$
=\Swan_\infty(\Sym^{r-i}\GGG_f\otimes\wedge^i\GGG_f)+1\leq\mathrm{rank}(\Sym^{r-i}\GGG_f\otimes\wedge^i\GGG_f).$$
\end{proof}

The hypothesis of proposition \ref{SL} can easily be checked from the coefficients of $f$: Let $A_{f'}$ be the companion matrix of $f'$, and $B=f(A_{f'})$. The eigenvalues of the $(d-1)\times(d-1)$ matrix $B$ are $s_1,\ldots,s_{d-1}$, and its trace is $s$. Next we construct the $(d-1)^2\times(d-1)^2$ matrix $B\otimes I_{d-1}-I_{d-1}\otimes B$, whose eigenvalues are all differences $s_i-s_j$. Its characteristic polynomial is then of the form $T^{d-1}g(T)$. The hypothesis of proposition \ref{SL} are equivalent to the discriminant of $g(T)$ being non-zero.

\bigskip

We will now deal with an important class of polynomials to which \ref{SL} does not apply.

\begin{defn}
We say that a polynomial $f\in k[x]$ is \emph{quasi-odd} if there exist $a,b\in k$ such that $f(a-x)=b-f(x)$. In this case, the degree $d$ is necessarily odd. 
\end{defn}

Notice that $a$ and $b$ are then uniquely determined: if $f=c_dx^d+\cdots+c_1x+c_0$, $a=\frac{-2c_{d-1}}{dc_d}$ and $b=2f(\frac{a}{2})$. If $f$ is quasi-odd, the set of critical values of the map $f:\AAA^1_k\to\AAA^1_k$ is invariant under the involution $s\mapsto b-s$. In particular, their sum is $\frac{b(d-1)}{2}$.

\begin{lem}
 If there is $a\in k$ such that $f(a-x)=-f(x)$, the Tate-twisted sheaf $\GGG_f(1/2)$ on $\GG_{m,k}$ is self-dual.
\end{lem}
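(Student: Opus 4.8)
The plan is to reduce the self-duality of $\GGG_f(1/2)$ to the purely geometric statement $\GGG_f\cong\GGG_{-f}$, which the hypothesis $f(a-x)=-f(x)$ makes transparent through the change of variable $x\mapsto a-x$. I first record the twist bookkeeping. As observed in Section 3, the dual of $\GGG_f$ is $\GGG_{-f}(1)$. Since dualizing commutes with Tate twisting, we get $(\GGG_f(1/2))^\vee=\GGG_f^\vee(-1/2)=\GGG_{-f}(1)(-1/2)=\GGG_{-f}(1/2)$. Thus $\GGG_f(1/2)$ is self-dual if and only if $\GGG_f\cong\GGG_{-f}$, and it suffices to produce this isomorphism.

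Next I would establish the identification $\FF_f\cong\FF_{-f}$ of the constructible sheaves on $\AAA^1_k$. Let $\sigma:\AAA^1_k\to\AAA^1_k$ be the automorphism $x\mapsto a-x$, which is defined over $k$ because $a\in k$. The hypothesis reads precisely $f\circ\sigma=-f$, so $\sigma$ on the source together with the identity on the target is an isomorphism between the two covers $f$ and $-f$ of $\AAA^1_k$. Since $\sigma$ is an isomorphism, $\sigma_\star\QQ=\QQ$ canonically, whence $(-f)_\star\QQ=f_\star\sigma_\star\QQ=f_\star\QQ$. This identification is compatible with the two trace augmentations to $\QQ$, because the trace map is a sum over the geometric fibres and $\sigma$ induces a bijection on each fibre of $f$; passing to kernels gives $\FF_{-f}\cong\FF_f$.

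I would then transport this across the Fourier transform. By Lemma \ref{fourier}, $j_!\GGG_f[1]=FT_\psi(\FF_f[1])$, and by the same construction applied to $-f$ one has $j_!\GGG_{-f}[1]=FT_\psi(\FF_{-f}[1])$. Since $\FF_f\cong\FF_{-f}$ and $FT_\psi$ is an equivalence of categories, we obtain $j_!\GGG_f[1]\cong j_!\GGG_{-f}[1]$; applying $j^\star$ (which recovers $\GGG_f[1]$ since $j^\star j_!=\mathrm{id}$) yields $\GGG_f\cong\GGG_{-f}$ on $\GG_{m,k}$. Combined with the first paragraph, this gives the asserted self-duality of $\GGG_f(1/2)$.

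The only delicate points are the twist bookkeeping and the compatibility of the trace maps under $\sigma$; neither presents a real obstacle, but both must be stated carefully. As an independent sanity check one can verify the isomorphism at the level of Frobenius traces: the local $r$-th power trace of $\GGG_{-f}$ at $t\in k^\star$ is $-\sum_{x\in k_r}\psi(\Tr(-tf(x)))=-\sum_{x\in k_r}\psi(\Tr(tf(a-x)))$, which after the substitution $x\mapsto a-x$ equals the local trace of $\GGG_f$ at $t$ for every $t$ and every $r$. The genuine isomorphism, however, is cleanest to produce on the $\FF$ side before taking the Fourier transform, as above.
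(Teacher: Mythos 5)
Your proposal is correct and takes essentially the same route as the paper: both arguments produce the geometric isomorphism $\FF_f\cong\FF_{-f}$ from the substitution on the source, transport it through the Fourier transform to obtain $\GGG_f\cong\GGG_{-f}$, and combine this with the duality statement of Section 3 (dual of $\GGG_f$ is $\GGG_{-f}(1)$, equivalently the pairing $\GGG_f\times\GGG_{-f}\to\QQ(-1)$) to conclude that $\GGG_f(1/2)$ is self-dual. The only cosmetic difference is that the paper first uses $\GGG_{f(x-c)}\cong\GGG_{f(x)}$ to reduce to $f$ odd and then applies $x\mapsto -x$, whereas you work directly with $\sigma(x)=a-x$.
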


\begin{proof}
 Since $\GGG_{f(x-c)}\cong\GGG_{f(x)}$ for any $c\in k$, we may assume that $f$ is odd. The automorphism $x\mapsto -x$ induces an isomorphism $\FF_f\cong [-1]^\star\FF_f=\FF_{-f}$. Taking Fourier transform, we get an isomorphism $\GGG_f\cong\GGG_{-f}$. Composing with the duality pairing (cf. section 3) $\GGG_f\times\GGG_{-f}\to\QQ(-1)$ we get a perfect pairing $\GGG_f\times\GGG_f\to\QQ(-1)$ or, equivalently, $\GGG_f(1/2)\times\GGG_f(1/2)\to\QQ$.
\end{proof}

\begin{prop}\label{Sp}
 Let $f\in k[x]$ be quasi-odd. Label the critical values $s_i$ so that $s_{d-i}=b-s_i$ for $i=1,\ldots,d-1$. Suppose that $p>2d-1$ and the only equalities among the numbers $s_i-s_j$ for $i\neq j$ are $s_i-s_j=s_{d-j}-s_{d-i}$. Then $G=Sp(V)$ if $b=0$ (if and only if
 $s=0$, since $p>d-1$), and $G=\mu_p\cdot Sp(V)$ if $b\neq 0$.
\end{prop}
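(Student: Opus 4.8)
The plan is to follow the template of the proof of Proposition \ref{SL}, with the role of the determinant replaced by the self-duality furnished by the preceding lemma, and to reduce the case $b\neq 0$ to the case $b=0$ by an Artin--Schreier twist. First I would record the twisting reduction. Set $\tilde f=f-b/2$; then $\tilde f(a-x)=-\tilde f(x)$, so $\tilde f$ is quasi-odd with parameter $\tilde b=0$ and with the \emph{same} critical-value differences $\tilde s_i-\tilde s_j=s_i-s_j$, hence $\tilde f$ satisfies exactly the hypotheses of the proposition in the case $b=0$. On the other hand, subtracting a constant from $f$ translates the critical values of $f$, and under Fourier transform this amounts to tensoring $\GGG_f$ by an Artin--Schreier sheaf: $\GGG_f\cong\GGG_{\tilde f}\otimes\LL_{\psi_{ct}}$ on $\GG_{m,k}$ for a suitable constant $c$ (a nonzero multiple of $b$). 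Since $\LL_{\psi_{ct}}$ is geometrically a rank-one sheaf whose monodromy group is the group $\mu_p$ of scalars of order $p$ (and is trivial exactly when $b=0$), the geometric monodromy group of $\GGG_f$ is generated by that of $\GGG_{\tilde f}$ together with $\mu_p$. Thus it suffices to prove $G=Sp(V)$ in the case $b=0$, and the case $b\neq 0$ will then give $G=\mu_p\cdot Sp(V)$.

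Now assume $b=0$ (equivalently $s=0$). As in Proposition \ref{SL}, the hypothesis that the only coincidences among the differences $s_i-s_j$ are the forced equalities $s_i-s_j=s_{d-j}-s_{d-i}$ guarantees in particular that the critical values $s_i$ are distinct; since $p>2d-1>d$, \cite[Lemma 7.10.2.3]{esde} shows that $\FF_f$ is a geometrically irreducible tame reflection sheaf, and $\GGG_f$ is geometrically irreducible because Fourier transform preserves irreducibility, so $G_0$ is semisimple by \cite[Corollaire 1.3.9]{De}. By the preceding lemma, $\GGG_f(1/2)$ is self-dual, so $G$ preserves a non-degenerate bilinear form on $V$ and is therefore contained in the corresponding orthogonal or symplectic group. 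The key point is that this form is \emph{alternating}: the permutation sheaf $f_\star\QQ$, and hence the reflection sheaf $\FF_f$, carries a symmetric autoduality (the trace form of the finite \'etale cover), and the Deligne--Fourier transform reverses the parity of the autoduality, the resulting pairing on $\GGG_f$ taking values in $\QQ(-1)$. Hence $G\subseteq Sp(V)$. As a consistency check, $Sp(V)$ forces $\dim V=d-1$ to be even, which indeed holds because $d$ is odd.

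It remains to prove $Sp(V)\subseteq G$. This is the self-dual incarnation of the reflection-sheaf monodromy theorem \cite[Theorem 7.9.6]{esde}: under the genericity hypothesis on the differences $s_i-s_j$, which rules out finite or otherwise smaller monodromy, the geometric monodromy of the Fourier transform of the irreducible reflection sheaf $\FF_f$ is as large as the autoduality permits, namely $Sp(V)$. Combined with $G\subseteq Sp(V)$ and the connectedness of $Sp(V)$, this gives $G_0=G=Sp(V)$ in the case $b=0$. The twisting reduction of the first paragraph then yields $G=\mu_p\cdot Sp(V)$ when $b\neq 0$.

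The main obstacle is the step distinguishing the symplectic from the orthogonal case, that is, verifying that the autoduality of $\GGG_f$ is alternating rather than symmetric. I would settle this by tracking the symmetry of the pairing through the Fourier transform: starting from the symmetric trace-form autoduality of $f_\star\QQ$, the identity $D\circ FT_\psi=FT_{\bar\psi}\circ D(1)$ together with the involutivity formula $FT_\psi\circ FT_\psi=[x\mapsto -x]^\star(-1)$ forces the induced form on $\GGG_f$ to be alternating; alternatively one invokes Katz's explicit determination of the autoduality sign for Fourier transforms of reflection sheaves in \cite{esde}. The even parity of $\dim V=d-1$ is a reassuring necessary condition, and it also explains structurally why the quasi-odd hypothesis (which forces $d$ odd) is precisely what produces symplectic rather than special-linear monodromy.
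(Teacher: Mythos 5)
Your proposal follows the paper's route in all essentials: for $b=0$ you get a geometrically irreducible tame reflection sheaf from \cite[Lemma 7.10.2.3]{esde}, use the symplectic autoduality of $\GGG_f(1/2)$ to get $G\subseteq Sp(V)$, and invoke Katz's classification to force $G=Sp(V)$ (the paper uses \cite[Theorem 7.9.7]{esde} here, not 7.9.6); for $b\neq 0$ you twist by an Artin--Schreier character, exactly as the paper does. However, two of your justifications are not proofs as written. For the symplectic sign, the formal identities $D\circ FT_\psi=FT_{\bar\psi}\circ D(1)$ and involutivity of the Fourier transform only yield \emph{self-duality} of $\GGG_f(1/2)$; they are sign-blind and cannot ``force'' the pairing to be alternating rather than symmetric. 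The parity statement is a genuine theorem requiring a computation, and what you offer as an ``alternative'' --- Katz's \cite[Lemma 7.10.4]{esde} --- is in fact the paper's (and the only supplied) proof of this step.

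The more serious gap is the $b\neq 0$ reduction. Your principle that the monodromy of $\GGG_{f-b/2}\otimes\LL_{\psi_{b/2}}$ is ``generated by'' that of $\GGG_{f-b/2}$ together with $\mu_p$ is false in general: the monodromy group of a twist is the Zariski closure of the image of $g\mapsto\chi(g)\rho(g)$, which need not contain either factor (twisting a character of order $p$ by its inverse gives trivial monodromy, not $\mu_p$). The statement is true here, but only because $Sp(V)$ is connected, and supplying that argument is precisely what the paper does and what you omit: let $H\subseteq\pi_1(\GG_{m,\bar k})$ be the index-$p$ kernel of the character $\LL_{\psi_{b/2}}$; the restrictions of $\GGG_f$ and $\GGG_{f-b/2}$ to $H$ are isomorphic, and since $Sp(V)$ has no proper closed subgroups of finite index, the image of $H$ under $\GGG_f$ is still dense in $Sp(V)$, whence $Sp(V)\subseteq G\subseteq\mu_p\cdot Sp(V)$. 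One must then still exclude $G=Sp(V)$: the paper does this by observing that $\det\GGG_f$ is geometrically nontrivial by Corollary \ref{determinant} (here $s=b(d-1)/2\neq 0$ because $p>d-1$), whereas every element of $Sp(V)$ has determinant $1$. Without these two steps, your first paragraph asserts the conclusion of the proposition rather than proving it.
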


\begin{proof}
The hypothesis forces the $s_i$ to be distinct: if $s_i=s_j$ for $i\neq j$ then $s_i-s_j=s_j-s_i$, so $i=d-i$ and $j=d-j$, which is impossible since $d$ is odd. Then by \cite[Lemma 7.10.2.3]{esde} $\FF_f$ is a geometrically irreducible tame reflection sheaf. If $b=0$, we may assume as in the previous lemma that $f$ is odd. The self-duality of $\GGG_f(1/2)$ is symplectic (it suffices to show it geometrically, and that is done in \cite[Lemma 7.10.4]{esde}), so we have $G\subseteq Sp(V)$. We now apply \cite[Theorem 7.9.7]{esde}, from which $G$ must contain $SL(V)$, $Sp(V)$ or $SO(V)$, and therefore we must have $G=Sp(V)$.

If $b\neq 0$, $f(x)-\frac{b}{2}$ is quasi-odd with $b=0$, and $\GGG_f=\GGG_{f-b/2}\otimes\LL_{\psi_{b/2}}$. Let $H\subseteq\pi_1(\GG_{m,\bar k})$ be the kernel of the character $\LL_{\psi_{b/2}}$, it is an open normal subgroup of index $p$ and the restrictions of the representations $\GGG_f$ and $\GGG_{f-b/2}$ to $H$ are isomorphic. Since the monodromy group of $\GGG_{f-b/2}$ is $Sp(V)$, which does not have open subgroups of finite index, the closure of the image of $H$ on $GL(V)$ under $\GGG_f$ is the whole $Sp(V)$. Therefore, $Sp(V)\subseteq G$ and $G\subseteq \mu_p\cdot Sp(V)$, since $\pi_1(\GG_{m,\bar k})$ acts via $\LL_{\psi_{b/2}}$ by multiplication by $p$-th roots of unity. Since the determinant of $G$ is non-trivial by Corollary \ref{determinant}, it must be $\mu_p\cdot Sp(V)$.
\end{proof}

\begin{cor}\label{estimateSp}
 Under the hypotheses of Proposition \ref{Sp}, for any integer $r\geq 1$ the number $N_r(f)$ of $k_r$-rational points on the curve
$$
y^q-y=f(x)
$$
satisfies the estimate
$$
|N_r(f)-q^r|\leq C_{d,r} q^{\frac{r+1}{2}}
$$
where
$$
C_{d,r}=\sum_{i=0}^r |i-1|{{d-2+r-i}\choose{r-i}}{{d-1}\choose{i}},
$$
unless $r\leq d-1$ is even and either $b=0$ or $p$ divides $r$, in which case it satisfies the estimate
$$
|N_r(f)-(q^r+q^{\frac{r}{2}+1})|\leq C_{d,r} q^{\frac{r+1}{2}}.
$$
\end{cor}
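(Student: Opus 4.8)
The plan is to run the argument of Corollary \ref{estimateSL} with the symplectic group in place of $SL(V)$. By Proposition \ref{Sp} the geometric monodromy group is $G=Sp(V)$ when $b=0$ and $G=\mu_p\cdot Sp(V)$ when $b\neq 0$; note that $b=0$ and $s=0$ are equivalent here, since $s=b(d-1)/2$ and $p>d-1$. Exactly as in Corollaries \ref{bound} and \ref{estimateSL}, everything reduces to computing, for each $i$, the space of $G$-invariants in $\Sym^{r-i}V\otimes\wedge^i V$, because $\HH^2_c(\GG_{m,\bar k},\Sym^{r-i}\GGG_f\otimes\wedge^i\GGG_f)$ is the $(-1)$-Tate-twisted dual of that invariant space, and the remaining $\HH^1_c$-terms are controlled by purity.

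First I would compute the $Sp(V)$-invariants. Since $V$ is self-dual, $\wedge^i V\cong(\wedge^i V)^\star$, so the dimension of the invariants of $\Sym^{r-i}V\otimes\wedge^i V$ equals the multiplicity of the irreducible representation $\Sym^{r-i}V$ inside $\wedge^i V$. Decomposing $\wedge^i V$ into its primitive constituents (the fundamental representations $\wedge_0^{i-2k}V$), the only ones isomorphic to a symmetric power $\Sym^a V$ are the trivial representation ($a=0$) and the standard one $V$ ($a=1$); higher symmetric powers never occur. The trivial representation appears in $\wedge^i V$ exactly when $i$ is even and $V$ exactly when $i$ is odd, each with multiplicity one. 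Hence $\Sym^{r-i}V\otimes\wedge^i V$ carries a one-dimensional $Sp(V)$-invariant line precisely when $r$ is even and $i\in\{r-1,r\}$ with $1\leq i\leq d-1$, i.e. when $r\leq d-1$ is even, and has no invariants otherwise. When $b\neq 0$ the central factor $\mu_p$ acts on $\Sym^{r-i}V\otimes\wedge^i V$ by the scalar $\zeta\mapsto\zeta^r$, so these lines stay $G$-invariant if and only if $p\mid r$. This already isolates exactly the exceptional cases in the statement; in all other cases every $\HH^2_c$ vanishes and $|N_r(f)-q^r|\leq C_{d,r}q^{(r+1)/2}$ follows verbatim from Corollary \ref{bound}.

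In the exceptional case I would then pin down the Frobenius action on the two invariant lines at $i=r$ and $i=r-1$. The symplectic pairing furnished by the Lemma preceding Proposition \ref{Sp} is defined over $k$ and maps $\GGG_f\otimes\GGG_f\to\QQ(-1)$, so the associated invariant bivector $\omega\in\wedge^2\GGG_f$ has Frobenius eigenvalue exactly $q$ (in a symplectic Frobenius basis $\omega=\sum_j e_j\wedge f_j$ with each $e_j\wedge f_j$ of eigenvalue $q$). Both invariant lines are built from $r/2$ copies of $\omega$ (the line in $\wedge^r V$ is $\omega^{\wedge r/2}$, and the one in $V\otimes\wedge^{r-1}V$ comes from $\omega^{\wedge(r/2-1)}$ together with one further contraction against the form), so Frobenius acts on them by $q^{r/2}$ on the sheaf and by exactly $q^{r/2+1}$ on $\HH^2_c$, with no sign ambiguity, unlike the $\beta=\pm1$ of Corollary \ref{estimateSL}. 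Feeding the coefficients $(-1)^{i-1}(i-1)$ at $i=r$ and $i=r-1$, namely $-(r-1)$ and $r-2$ for $r$ even, into formula \ref{lfunction} (where the $\HH^2_c$-factors sit in the denominator) and applying the Lefschetz trace formula, the two contributions combine to a net main-term correction of exactly $+q^{r/2+1}$. For the case $b\neq 0$ and $p\mid r$ I would reduce to $b=0$: writing $\GGG_f=\GGG_{f-b/2}\otimes\LL_{\psi_{b/2}}$, the factor $\LL_{\psi_{b/2}}^{\otimes r}=\LL_{\psi_{rb/2}}$ is geometrically trivial when $p\mid r$ (as $rb/2=0$ in $k$), so the relevant sheaves agree with those for the genuinely odd polynomial $f-b/2$ and the same eigenvalue computation applies.

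Finally, to retain the constant $C_{d,r}$ I would bound $\dim\HH^1_c$ just as in Corollary \ref{estimateSL}: for the two exceptional values of $i$ the sheaf $\Sym^{r-i}\GGG_f\otimes\wedge^i\GGG_f$ has a slope-zero piece at infinity (the $I_\infty$-invariant underlying the nonzero $\HH^2_c$), so the Grothendieck--Ogg--Shafarevich formula gives $\dim\HH^1_c=\Swan_\infty+1\leq\mathrm{rank}$, the same rank bound as before. The main obstacle is the exact-eigenvalue step in the exceptional case: one must verify that the invariant lines genuinely are powers of the $k$-rational symplectic bivector $\omega$, so that the eigenvalue on $\HH^2_c$ is precisely $q^{r/2+1}$ and the correction is $+q^{r/2+1}$ rather than merely $\pm q^{r/2+1}$, and one must track the Tate twists carefully through the symplectic self-duality of $\GGG_f(1/2)$.
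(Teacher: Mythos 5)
Your proposal is correct and follows essentially the same route as the paper: the same reduction to $G$-invariants of $\Sym^{r-i}V\otimes\wedge^i V$, the same identification of the exceptional cases ($r\leq d-1$ even, with the $\mu_p$-factor forcing $p\mid r$ when $b\neq 0$), the same reduction of the case $b\neq 0$, $p\mid r$ to the case $b=0$, and the same conclusion via the slope-zero/Swan-conductor bound of Corollary \ref{estimateSL}. The only cosmetic differences are that the paper invokes Katz's invariant-theory lemma (\cite{KatzFS}, lemma on p.~62) where you decompose $\wedge^i V$ into fundamental representations by hand, and that it gets the eigenvalue $q^{r/2}$ on the invariant lines $W_i$ by noting that Frobenius lies in $Sp(V)=G$, which acts trivially on any one-dimensional invariant subspace, rather than by your explicit identification of those lines as powers of the $k$-rational symplectic bivector --- both arguments resting on the same point, the $k$-rationality of the pairing $\GGG_f\otimes\GGG_f\to\QQ(-1)$.
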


\begin{proof}
 As a representation of $Sp(V)$, we have
$$
\Sym^{r-i}V\otimes\wedge^i V=\Hom(\wedge^i V,\Sym^{r-i}V)
$$
whose invariant subspace, by \cite[lemma on p.62]{KatzFS}, is $0$ except when $i$ is odd, $r=i+1$ and $i\leq d-1$, or when $i$ is even, $r=i$ and $i\leq d-1$. In particular, since $d$ is odd, $G$ has no non-zero invariants on $\Sym^{r-i}V\otimes\wedge^i V$ for any $i$ if $r$ is odd or $r>d-1$.

Suppose from now on that $r\leq d-1$ is even, and
let $W_i$ be the one-dimensional subspace of $\Sym^{r-i}V\otimes\wedge^i V$
invariant under $Sp(V)$, for $i=r-1$ or $i=r$. Consider the case where $b=0$ first.
Since $\GGG_f(1/2)$ is self-dual, all Frobenius images are in $Sp(V)=G$. In particular,
all Frobenii act trivially on $W_i(r/2)$, and therefore they act by multiplication
by $q^{\frac{r}{2}}$ on $W_i\subseteq\bigotimes^r V$. Therefore

$$
\prod_{i=0}^r \det(1-\mathrm{Frob}_kT|\HH^2_c(\GG_{m,\bar
k},\Sym^{r-i}\GGG_f\otimes\wedge^i\GGG_f))^{(-1)^{i-1}(i-1)}
$$
$$
=\det(1-\mathrm{Frob}_kT|W_{r-1}(-1))^{(-1)^{r-2}(r-2)}\det(1-\mathrm{Frob}_kT|W_r(-1))^{(-1)^{r-1}(r-1)}
$$
$$
=(1-q^{\frac{r}{2}+1}T)^{(-1)^{r-2}(r-2) + (-1)^{r-1}(r-1)}
$$
$$
=(1-q^{\frac{r}{2}+1}T)^{(-1)^{r-1}}=(1-q^{\frac{r}{2}+1}T)^{-1}
$$
since $r-1=d-2$ is odd.

In the case where $b\neq 0$, $G/Sp(V)\cong\mu_p$ acts on $W_i$.
Let
$$A=\mathrm{diag}(\zeta_p,\ldots,\zeta_p)\in G$$
be a scalar matrix, where $\zeta_p\in\QQ$ is a $p$-th root of unity.
Then the class of $A$ generates $G/Sp(V)$, so $G$ fixes $W_i$ if and only if $A$ does. But $A$ acts on $W_i$ by multiplication by $\zeta_p^r$, so this action is trivial if and only if $\zeta_p^r=1$, that is, if and only if $p$ divides $r$. In that case, $\Sym^{r-i}\GGG_f\otimes\wedge^i\GGG_f=(\Sym^{r-i}\GGG_{f-b/2}\otimes\wedge^i\GGG_{f-b/2})\otimes\LL_{\psi_{b/2}}^{\otimes r}=\Sym^{r-i}\GGG_{f-b/2}\otimes\wedge^i\GGG_{f-b/2}$, so we can apply the $b=0$ case and we get again
$$
\prod_{i=0}^r \det(1-\mathrm{Frob}_kT|\HH^2_c(\GG_{m,\bar k},\Sym^{r-i}\GGG_f\otimes\wedge^i\GGG_f))^{(-1)^{i-1}(i-1)}=
$$
$$
=(1-q^{\frac{r}{2}+1}T)^{(-1)^{r-1}}=(1-q^{\frac{r}{2}+1}T)^{-1}.
$$

We conclude as in corollary \ref{estimateSL}.\end{proof}

Again, the hypothesis of proposition \ref{Sp} can be checked from the coefficients of $f$: After adding a constant, we may assume that $b=0$. Let $A_{f'}$ be the companion matrix of $f'$, and $B=f(A_{f'})$. The eigenvalues of the $(d-1)\times(d-1)$ matrix $B$ are $s_1,\ldots,s_{d-1}$, and its trace is $s=\frac{b(d-1)}{2}$. Construct the $(d-1)^2\times(d-1)^2$ matrix $B\otimes I_{d-1}-I_{d-1}\otimes B$, whose eigenvalues are all differences $s_i-s_j$. Its characteristic polynomial is then of the form $T^{d-1}h(T/2)g(T)^2$, where $h(T)$ is the characteristic polynomial of $B$, since all non-zero roots different from $s_i-s_{d-i}=2s_i$ for $i=1,\ldots,d-1$ appear in pairs. The hypothesis of proposition \ref{Sp} is equivalent to the discriminant of $h(T/2)g(T)$ being non-zero.

\section{Generalization to Artin-Schreier hypersurfaces}

In this section we will extend corollary \ref{bound} to higher dimensional hypersurfaces. Since the proofs are very similar, we will only sketch them, 
indicating the differences where necessary.

Let $f\in k[x_1,\ldots,x_n]$ be a polynomial of degree $d$ prime to $p$, $C_f$ the Artin-Schreier hypersurface defined on $\AAA^{n+1}_k$ by the equation
\begin{equation}\label{hypersurface}
 y^q-y=f(x_1,\ldots,x_n).
\end{equation}
Denote by $N_r(f)$ its number of rational points over $k_r$. We have
again a formula
\begin{equation}\label{characterhyper}
 N_r(f)-q^{nr}=\sum_{t\in k^\star}\sum_{x\in k_r^n}\psi(t\cdot\Tr(f(x)))=\sum_{t\in k^\star}\sum_{x\in k_r^n}\psi(\Tr(tf(x)))
\end{equation}
where $\Tr$ denotes the trace map $k_r\to k$. 
Assume that $f$ is a Deligne polynomial, that is, the leading form of $f$ defines a smooth projective hypersurface 
of degree $d$ not divisible by $p$. Applying Deligne's bound \cite{De} to the above inner sum, one deduces that 
$$|N_r(f) - q^{nr}| \leq (q-1)(d-1)^n q^{\frac{nr}{2}}.$$
This is precisely Weil's bound in the case $n=1$. Our purpose of this section is to improve the above bound and 
obtain the estimate of the following form
$$|N_r(f)-q^{nr}| \leq C_{d,r}q^{\frac{nr+1}{2}},$$
for some constant $C_{d,r}$ depending only on $d, r$ and $n$.

Define $K_f=\R\pi_!{\mathcal L}_{\psi(tf(x))}\in{{\mathcal
D}^b_c({\mathbb G}_{m,k},\QQ)}$, where $\pi:{\mathbb
G}_m\times{\mathbb A}^n\to{\mathbb G}_m$ is the projection. The
trace formula implies that the trace of the action of the $r$-th power of a local Frobenius element at $t\in k^\star$ on $K_f$ is given by 
$\sum_{x\in k_r^n}\psi(\Tr(tf(x)))$. Suppose from now on that the homogeneous part $f_d$ of highest degree of $f$ defines a non-singular hypersurface. Then by \cite[3.7]{De}, $K_f$ is a single smooth sheaf $\GGG_f$ placed in degree $n$, of rank $(d-1)^n$ and pure of weight $n$. Therefore
$$
N_r(f)-q^{nr}= (-1)^n\sum_{t\in k^\star}\Tr({\rm Frob}^r_t|(\GGG_f)_t) =  (-1)^n\sum_{t\in k^\star}{\rm Tr}({\rm Frob}_t | [\GGG_f]^r_t)
$$
where
$$
[\GGG_f]^r =\sum_{i=0}^r (-1)^{i-1} (i -1)\cdot {\rm Sym}^{r-i}\GGG_f \otimes \wedge^i
\GGG_f
$$
is the $r$-th Adams operation on $\GGG_f$.

We can give an interpretation of $\GGG_f$ in terms of the Fourier transform like we did in the one-dimensional case. Exactly as in lemma \ref{fourier}, we can show
\begin{lem}
 The object $\GGG_f[1]\in{\mathcal D}^b_c(\GG_m,\QQ)$ is the restriction to $\GG_m$ of the Fourier transform of $\R f_!\QQ[n]$ with respect to $\psi$.
\end{lem}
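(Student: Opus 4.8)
The plan is to mimic exactly the proof of Lemma \ref{fourier}, replacing the one-variable setup by the $n$-variable one and keeping track of the cohomological shifts. First I would recall that $\R f_!\QQ$ lives on $\AAA^1_k$ (the target of $f:\AAA^n_k\to\AAA^1_k$) and that, by definition, the Fourier transform $FT_\psi(\R f_!\QQ[n])$ is computed as $\R{\pi_1}_!(\pi_2^\star \R f_!\QQ\otimes\mu^\star\LL_\psi)[n+1]$, where $\pi_1,\pi_2,\mu:\AAA^1_k\times\AAA^1_k\to\AAA^1_k$ are the two projections and the multiplication map. By proper base change for the cartesian square expressing $\pi_2\circ f$, one rewrites $\pi_2^\star\R f_!\QQ$ as the direct image of $\QQ$ along $\mathrm{id}\times f:\AAA^1\times\AAA^n\to\AAA^1\times\AAA^1$, and then the projection formula identifies the Fourier transform with $\R{\pi_1}_!(\LL_{\psi(tf(x))})[n+1]$, where now $\pi_1:\AAA^1_k\times\AAA^n_k\to\AAA^1_k$ is the projection onto the first factor and $\LL_{\psi(tf(x))}$ is the Artin--Schreier sheaf pulled back via $(t,x)\mapsto tf(x)$.

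Next I would restrict to $\GG_m$. Writing $j:\GG_{m,k}\hookrightarrow\AAA^1_k$, proper base change gives $j^\star FT_\psi(\R f_!\QQ)[n]=j^\star\R{\pi_1}_!(\LL_{\psi(tf(x))})[n+1]$, and the right-hand side is precisely $K_f[n+1]$ by the definition $K_f=\R\pi_!\LL_{\psi(tf(x))}$ recorded just before the statement. Since we are assuming that $f_d$ is non-singular, the cited result \cite[3.7]{De} tells us $K_f=\GGG_f[-n]$, so $K_f[n+1]=\GGG_f[1]$. This yields the desired identification $j^\star FT_\psi(\R f_!\QQ)[n]\cong\GGG_f[1]$, which is the assertion of the lemma (the statement says ``the restriction to $\GG_m$'', i.e.\ $j^\star$ applied to the Fourier transform).

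The main obstacle, exactly as in the one-variable case, is bookkeeping the shifts so that the degrees match: one must be careful that the Deligne vanishing \cite[3.7]{De} places $K_f$ in a single degree $n$ (so $K_f=\GGG_f[-n]$ rather than $[-1]$), and that the Fourier transform convention introduces the shift $[n+1]$ coming from $\R f_!\QQ[n]$ together with the extra $[1]$ built into $FT_\psi$. Since the excerpt states the lemma with $\R f_!\QQ[n]$ as the input, the shifts are arranged precisely so that the output is $\GGG_f[1]$; I would verify this numerically once and then invoke ``exactly as in Lemma \ref{fourier}'' for the base-change and projection-formula steps, which are formally identical. Unlike the one-dimensional Lemma \ref{fourier}, here there is no need to separately analyze the stalk at $0$ or prove a $j_!j^\star\to\mathrm{id}$ quasi-isomorphism, because the statement only claims an identification after restricting to $\GG_m$ via $j^\star$; that is what makes the proof a direct sketch rather than requiring the additional vanishing argument $FT_\psi(\FF_f)_0=0$ used earlier.
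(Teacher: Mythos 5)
Your proposal is correct and takes essentially the same route as the paper, whose entire proof is the deferral ``exactly as in Lemma \ref{fourier}'': proper base change and the projection formula identify $FT_\psi(\R f_!\QQ[n])$ with $\R\pi_!(\LL_{\psi(tf(x))})[n+1]$, whose restriction to $\GG_{m,k}$ is $K_f[n+1]=\GGG_f[-n][n+1]=\GGG_f[1]$ by Deligne's theorem \cite[3.7]{De}. You are also right that the stalk-at-$0$ vanishing argument (the $j_!j^\star\to\mathrm{id}$ step) from the one-variable case is not needed here, since the statement only concerns the restriction to $\GG_m$.
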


We compactify $f$ via the map $\tilde f:X\to\AAA^1_k$, where $X\subseteq \PP^n\times\AAA^1$ is defined by the equation $F(x_0,x_1,\ldots,x_n)=tx_0^d$, $F$ being the homogenization of $f$ with respect to the variable $x_0$, and $\tilde f$ the restiction of the second projection to $X$. Suppose that the subscheme of $\AAA^n_k$ defined by the ideal $\langle\partial f/\partial x_1,\ldots,\partial f/\partial x_n\rangle$ is finite \'etale over $k$, and the images of its $\bar k$-points under $f$ are distinct. Then for every $s\in\bar k$, the fibre $X_s$ has at worst one isolated non-degenerate quadratic singularity, which is located on the affine part (since the part at infinity is defined for every fibre by $f_d(x)=0$ and is therefore non-singular).

 We have a distinguished triangle
$$
\R f_!\QQ\to\R \tilde f_\star\QQ\to \R (\tilde f_{|X_0})_\star\QQ\to
$$
where $X_0=X\backslash\AAA^n\cong Y\times\AAA^1$, $Y$ being the smooth hypersurface defined in $\PP^{n-1}$ by $f_d=0$. Since $\R (\tilde f_{|X_0})_\star\QQ$ is just the constant object $\R\Gamma(Y,\QQ)$, its Fourier transform is supported at $0$. So
$$
\GGG_f[1]\cong(FT_\psi\R f_!\QQ[n])_{|\GG_{m,k}}\cong(FT_\psi\R \tilde f_\star \QQ[n])_{|\GG_{m,k}}.
$$

\begin{prop}\label{monodromyinfn}
 Suppose $p>2$. Under the previous hypotheses, let $z_1,\ldots,z_{(d-1)^n}\in \AAA^n_{\bar k}$ be the distinct points such that $\frac{\partial f}{\partial x_i}(z_j)=0$ for all $i=1,\ldots,n$, and let $s_i=f(z_i)$. The action of the inertia group $I_\infty$ at infinity on $\GGG_f$ decomposes as a direct sum $\bigoplus{\LL_{\psi_{s_i}}}$ if $n$ is even, and $\bigoplus(\LL_{\rho}\otimes\LL_{\psi_{s_i}})$ if $n$ is odd, where $\rho$ is the unique character of $I_\infty$ of order $2$.
\end{prop}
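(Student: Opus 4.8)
The plan is to imitate the proof of Proposition \ref{monodromyinf}, replacing the one-variable input by its higher-dimensional analogue via the $\ell$-adic Picard--Lefschetz formula. Starting from the identification $\GGG_f[1]\cong(FT_\psi\R\tilde f_\star\QQ[n])_{|\GG_{m,k}}$ established above, I would apply Laumon's stationary phase formula (\cite{laumon}, \cite[Theorem 7.5.4]{esde}) to read off the local monodromy of $\GGG_f$ at $\infty$ from the local monodromies of $\R\tilde f_\star\QQ$ on $\AAA^1_k$. As in the rank-one case, the part of slope $>1$ corresponds to the slope $>1$ part of $\R\tilde f_\star\QQ$ at $s=\infty$; since $p\nmid d$, the degeneration of $X_s$ as $s\to\infty$ (where the fibre collapses onto the fixed smooth hypersurface $Y$ defined by $f_d=0$) is tame, so this part vanishes. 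The slope $\leq 1$ part is then the direct sum, over the critical values $s_i$, of $\LL_{\psi_{s_i}}$ tensored with $LFT^{(0,\infty)}$ applied to the local representation of $I_{s_i}$ on $\R\tilde f_\star\QQ$ modulo its $I_{s_i}$-invariants.

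Next I would compute this local representation modulo invariants. The cohomology sheaves $\R^j\tilde f_\star\QQ$ are lisse on all of $\AAA^1_k$ for $j\neq n-1$, since only the middle degree carries vanishing cycles, so they contribute nothing; the entire contribution comes from $\R^{n-1}\tilde f_\star\QQ$. Under the standing hypotheses each fibre $X_{s_i}$ has a single isolated non-degenerate quadratic singularity in its affine part, so the vanishing cohomology at $s_i$ is one-dimensional, spanned by a vanishing cycle $\delta_i$, and since $p>2$ the monodromy is tame. Here the parity of the fibre dimension $m=n-1$ enters. When $n$ is odd, $m$ is even, $\langle\delta_i,\delta_i\rangle\neq 0$, and the Picard--Lefschetz reflection acts by $-1$ on the line spanned by $\delta_i$; hence modulo invariants $I_{s_i}$ acts on a one-dimensional space through the quadratic character, i.e.\ the local representation is $\LL_\rho$. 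When $n$ is even, $m$ is odd, $\delta_i$ is isotropic, and the monodromy is a unipotent transvection $\mathrm{id}+N$ with $N$ of rank one, image spanned by $\delta_i$, and $\delta_i\in\ker N$; thus $\delta_i$ is invariant, $N$ induces $0$ on the one-dimensional quotient by invariants, and $I_{s_i}$ acts \emph{trivially} there.

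Finally I would feed these into the local Fourier transform, tensoring with $\LL_{\psi_{s_i}}$ as in Proposition \ref{monodromyinf}. For $n$ odd, by \cite[2.5.3.1]{laumon} the transform $LFT^{(0,\infty)}(\LL_\rho)$ is again a tame rank-one sheaf with monodromy $\rho$ (the Gauss-sum Frobenius factor is unramified, hence invisible to $I_\infty$), giving the summand $\LL_\rho\otimes\LL_{\psi_{s_i}}$. For $n$ even, $LFT^{(0,\infty)}$ of the trivial character is a rank-one object of slope $1$ with trivial tame part (the local incarnation of $FT_\psi$ of a skyscraper), so after tensoring one obtains simply $\LL_{\psi_{s_i}}$. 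Summing over the critical values yields the asserted decomposition; the rank matches because by Bézout there are exactly $(d-1)^n$ critical points, each contributing one rank-one summand of slope $1$, which exhausts the full rank $(d-1)^n$ of $\GGG_f$ and confirms that there is no further component at $\infty$.

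I expect the main obstacle to be the case $n$ even: one must verify carefully that the unipotent transvection monodromy contributes, modulo invariants, the \emph{trivial} character rather than a quadratic one, and that its local Fourier transform is the pure additive character $\LL_{\psi_{s_i}}$ with no multiplicative twist. This is precisely where the higher-dimensional picture diverges from the rank-one setting (in which $n=1$ is odd and $\rho$ always appears), and it rests on combining the vanishing-cycle computation via Picard--Lefschetz with the precise local Fourier transform of tame, non-semisimple inertia representations.
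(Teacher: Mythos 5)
Your overall strategy (Fourier transform plus Laumon's stationary phase plus Picard--Lefschetz) is exactly the paper's, and your treatment of the case $n$ odd matches the paper's argument. But there is a genuine gap in the case $n$ even, precisely at the step you flagged as the main obstacle. You assert that $\R^j\tilde f_\star\QQ$ is lisse on all of $\AAA^1_k$ for $j\neq n-1$, ``since only the middle degree carries vanishing cycles'', so that the entire monodromy of $\GGG_f$ at infinity comes from $\R^{n-1}\tilde f_\star\QQ$. When $n$ is even the fibre dimension $n-1$ is odd, the intersection form on middle cohomology is alternating, and the vanishing cycle $\delta_i$ at a critical value $s_i$ may well be zero; nothing in the hypotheses of the proposition excludes this (it is exactly the situation discussed in the paper's closing remark, where $\R^n\tilde f_\star\QQ$ has punctual sections and $\GGG_f$ fails to be irreducible). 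When $\delta_i=0$, the local monodromy of $\R^{n-1}\tilde f_\star\QQ$ at $s_i$ is trivial, i.e.\ that sheaf is lisse at $s_i$, so stationary phase applied to it produces no summand at $s_i$ at all; your transvection analysis (``$N$ of rank one, image spanned by $\delta_i$'') presupposes $\delta_i\neq 0$. The missing summand $\LL_{\psi_{s_i}}$ comes instead from degree $n$: there is an exact sequence $0\to(\QQ)_{s_i}\to\R^n\tilde f_\star\QQ\to j_{s_i\star}j_{s_i}^\star\R^n\tilde f_\star\QQ\to 0$, and the Fourier transform of the skyscraper $(\QQ)_{s_i}$ is (up to shift) $\LL_{\psi_{s_i}}$, which injects into the $I_\infty$-representation $\GGG_f$ via the exact sequence of inertia representations relating $\GGG_f$ to the Fourier transforms of $\R^{n-1}\tilde f_\star\QQ$ and $\R^{n}\tilde f_\star\QQ$. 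Without this, your concluding rank count breaks down exactly when some $\delta_i=0$: the contributions of $\R^{n-1}$ alone then total strictly less than $(d-1)^n$.

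A related imprecision is in fact the source of the gap: you claim that ``$LFT^{(0,\infty)}$ of the trivial character is a rank-one object of slope $1$''. The local Fourier transform of an unramified (trivial) inertia representation is zero --- the Fourier transform of a constant sheaf is punctual at $0$ and contributes nothing at infinity. What is true, and what the paper uses, is that $LFT^{(0,\infty)}$ of a nontrivial unipotent Jordan block of size $2$ (the full local representation in the transvection case, not its quotient by invariants) is the \emph{trivial tame character}, so the contribution at $s_i$ is $\LL_{\psi_{s_i}}\otimes{\mathbf 1}=\LL_{\psi_{s_i}}$, which has slope $1$ because it is twisted by $\psi_{s_i}$. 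Since your formulation feeds $LFT^{(0,\infty)}$ the induced action on the quotient by invariants, it cannot distinguish ``unipotent transvection, trivial action on a one-dimensional quotient'' (which does contribute) from ``trivial local monodromy, zero-dimensional quotient'' (which contributes nothing from $\R^{n-1}$): both look like ``the trivial character'' in your bookkeeping. Repairing this forces you to do what the paper does: work with the pair of sheaves $\R^{n-1}\tilde f_\star\QQ$ and $\R^{n}\tilde f_\star\QQ$ simultaneously, and treat the cases $\delta_i\neq 0$ and $\delta_i=0$ separately, the latter via the skyscraper subsheaf of $\R^{n}\tilde f_\star\QQ$.
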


\begin{proof}
 We will obtain, for every $i$, a factor $\LL_{\psi_{s_i}}$ (resp. $\LL_\rho\otimes\LL_{\psi_{s_i}}$) in the local monodromy of $\GGG_f$ at infinity. Since the rank is $(d-1)^n$ and these characters are pairwise non-isomorphic, this will determine the action of $I_\infty$ completely.

 Let $S=\{s_i|i=1,\ldots,(d-1)^n\}$, and $U=\AAA^1\backslash S$. Since $\tilde f$ is proper and smooth over $U$, $\R^i\tilde f_\star\QQ$ is smooth on $U$ for every $i$. Since $X_s$ contains one isolated non-degenerate quadratic singularity for each $s\in S$, by \cite[4.4]{weil1} the sheaves $\R^i\tilde f_\star\QQ$ are smooth on $\AAA^1$ for $i\neq n-1,n$. In particular, their Fourier transforms are supported at $0$. We conclude that there is a distinguished triangle
$$
(FT_\psi\R^{n-1}\tilde f_\star\QQ[1])_{|\GG_{m,k}}\to\GGG_f[1]\to(FT_\psi\R^{n}\tilde f_\star\QQ[0])_{|\GG_{m,k}}\to
$$
and therefore an exact sequence of sheaves
\begin{equation}\label{preseq}
0\to\HHH^{-1}(FT_\psi\R^{n-1}\tilde f_\star\QQ[1])_{|\GG_{m,k}}\to\GGG_f\to\HHH^{-1}(FT_\psi\R^{n}\tilde f_\star\QQ[0])_{|\GG_{m,k}}\to
\end{equation}
$$
\to \HHH^0(FT_\psi\R^{n-1}\tilde f_\star\QQ[1])_{|\GG_{m,k}}\to 0
$$
since $FT_\psi\R^{n}\tilde f_\star\QQ[0]$ can only have non-zero cohomology sheaves in degrees $1$, $0$ and $-1$. Furthermore $\HHH^0(FT_\psi\R^{n-1}\tilde f_\star\QQ[1])$ is punctual, so this induces an exact sequence of $I_\infty$-representations
\begin{equation}\label{seqrep}
0\to\HHH^{-1}(FT_\psi\R^{n-1}\tilde f_\star\QQ[1])\to\GGG_f\to\HHH^{-1}(FT_\psi\R^{n}\tilde f_\star\QQ[0])\to 0.
\end{equation}

Let $V$ be the generic stalk of $\R^{n-1}\tilde f_\star\QQ$. Suppose that $n$ is odd, and let $s\in S$. Then by \cite[4.3 and 4.4]{weil1}, the inertia group $I_s$ acts on $V$ with invariant space $V_{I_s}$ of codimension $1$ (the orthogonal complement of the 'vanishing cycle' $\delta$) and on the quotient $V/V_{I_s}$ via its quadratic character $\rho$. Moreover, $\R^{n-1}\tilde f_\star\QQ$ is isomorphic at $s$ to the extension by direct image of its restriction to the generic point. By Laumon's local Fourier transform \cite[Section 7.4]{esde}, the action of the inertia group $I_\infty$ on $\HHH^{-1}(FT_\psi\R^{n-1}\tilde f_\star\QQ[1])$ (and thus on $\GGG_f$ by (\ref{seqrep})) contains a subcharacter isomorphic to $\LL_\rho\otimes\LL_{\psi_s}$. 

Suppose now that $n$ is even, and let $s\in S$. By \cite[4.3 and 4.4]{weil1}, there are two possibilities: if the 'vanishing cycle' $\delta$ is non-zero, the inertia group $I_s$ acts on $V$ with invariant space $V_{I_s}$ of codimension $1$ (the orthogonal complement of $\delta$) and trivially on the quotient $V/V_{I_s}$. Moreover, $\R^{n-1}\tilde f_\star\QQ$ is isomorphic at $s$ to the extension by direct image of its restriction to the generic point. By Laumon's local Fourier transform \cite[Section 7.4]{esde}, the action of the inertia group $I_\infty$ on $\HHH^{-1}(FT_\psi\R^{n-1}\tilde f_\star\QQ[1])$ (and thus on $\GGG_f$ by (\ref{seqrep})) contains a subcharacter isomorphic to $\LL_{\psi_s}$. 

 If $\delta=0$, then $I_s$ acts trivially on $V$, and there is an exact sequence of sheaves:
$$
0\to(\QQ)_s\to\R^n\tilde f_\star\QQ\to j_{s\star} {j_s^\star}\R^n\tilde f_\star\QQ\to 0
$$
where $(\QQ)_s$ is the punctual object $\QQ$ supported on $s$ and $j_s:\AAA^1-\{s\}\hookrightarrow\AAA^1$ is the inclusion. Taking Fourier transform, we deduce a distinguished triangle
$$
\LL_{\psi_s}[1]\to FT_\psi\R^n\tilde f_\star\QQ[0]\to FT_\psi j_{s\star} {j_s^\star}\R^n\tilde f_\star\QQ[0]\to
$$
and in particular an injection
$$
0\to \LL_{\psi_s}\to \HHH^{-1}(FT_\psi\R^n\tilde f_\star\QQ[0]).
$$
By (\ref{seqrep}), this gives a subcharacter isomorphic to $\LL_{\psi_s}$ in the monodromy of $\GGG_f$ at infinity.
\end{proof}

For completeness, we determine also the monodromy of $\GGG_f$ at $0$.

\begin{prop}
 The inertia group $I_0$ at $0$ acts on $\GGG_f$ as a direct sum $\bigoplus n_\chi\LL_\chi$ where the sum is taken over all characters $\chi$ of $I_0$ such that $\chi^d$ is trivial, $n_\chi=\frac{1}{d}((d-1)^n-(-1)^n)$ if $\chi$ is non-trivial and $n_\chi=(-1)^n+\frac{1}{d}((d-1)^n-(-1)^n)$ if $\chi$ is trivial.
\end{prop}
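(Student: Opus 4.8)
The plan is to compute the $I_0$-monodromy exactly as in the one-variable Proposition \ref{monodromy0}, by reading it off from the local monodromy at the value $\infty$ of the input to the Fourier transform. Since $\GGG_f[1]$ is the restriction to $\GG_m$ of $FT_\psi(\R f_!\QQ[n])$ and $\R f_!\QQ$ is tame at $\infty$, Laumon's local Fourier transform $LFT^{(\infty,0)}$ (\cite{laumon}, \cite[7.4]{esde}) identifies the tame $I_0$-representation $\GGG_f$ with the image under $LFT^{(\infty,0)}$ of the local monodromy of $\R f_!\QQ$ at the value $\infty$; as in Propositions \ref{monodromy0} and \ref{monodromyinfn}, $LFT^{(\infty,0)}$ carries $\LL_\chi$ to $\LL_{\bar\chi}$ up to an unramified twist, preserves ranks and the condition $\chi^d={\mathbf 1}$, and the claimed multiplicity function satisfies $n_{\bar\chi}=n_\chi$. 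Thus it suffices to show that the local monodromy of $\R f_!\QQ$ at the value $\infty$ is $\bigoplus_\chi n_\chi\LL_\chi$ over the characters with $\chi^d={\mathbf 1}$.

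First I would observe that this local monodromy depends only on the behaviour of $f$ near infinity in $\AAA^n$, hence only on the leading form $f_d$, and that for $f_d$ the family is $\GG_m$-equivariant: the substitution $x\mapsto\lambda x$ gives $f_d(\lambda x)=\lambda^d f_d(x)$, so the fibres $\{f_d=t\}$ over $t\in\GG_m$ form a single scaling orbit, and the monodromy as $t$ winds around $\infty$ is generated by the automorphism $x\mapsto\zeta x$ of the Milnor fibre $F=\{f_d=1\}$, where $\zeta$ is a primitive $d$-th root of unity. Because this automorphism has order $d$ and $p\nmid d$, the representation is tame with monodromy of order dividing $d$, which forces $\chi^d={\mathbf 1}$ and reduces the problem to the eigenvalue multiplicities of $\mu_d$ acting on the cohomology of $F$.

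The next step is this eigenvalue computation. Since $Y=\{f_d=0\}\subseteq\PP^{n-1}$ is smooth, $f_d$ has an isolated singularity at the origin, so $F$ is $(n-2)$-connected and its only nonvanishing reduced cohomology is $\HH^{n-1}(F)=\QQ^\mu$ with $\mu=(d-1)^n$ the Milnor number, matching the rank of $\GGG_f$. The eigenvalue multiplicities of the monodromy depend only on $d$ and $n$, so I would compute them on the Fermat polynomial $x_1^d+\cdots+x_n^d$, where the Sebastiani--Thom formula shows that the eigenvalue $\zeta^m$ occurs with multiplicity $\#\{(j_1,\dots,j_n)\in\{1,\dots,d-1\}^n:\ j_1+\cdots+j_n\equiv m \ (\mathrm{mod}\ d)\}$. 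A roots-of-unity filter evaluates this to $\frac1d((d-1)^n+(-1)^n(d-1))$ for $m\equiv0$ and to $\frac1d((d-1)^n-(-1)^n)$ for $m\not\equiv0$, which under the identification of the eigenvalue $\zeta^m$ with the corresponding Kummer character are exactly $n_\chi$ for $\chi$ trivial and nontrivial respectively.

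The main obstacle I anticipate is the cohomological bookkeeping that produces the extra summand $(-1)^n$ for the trivial character. One must track the correct cohomological degree $n-1$ and its Tate twist and, crucially, the removal of the constant part $\HH^0(F)=\QQ$ — equivalently the trace-map quotient $\QQ$, the term $\R\Gamma(Y,\QQ)$ in the triangle $\R f_!\QQ\to\R\tilde f_\star\QQ\to\R\Gamma(Y,\QQ)$, whose Fourier transform is punctual at $0$ and so does not contribute to the lisse sheaf $\GGG_f$. Working throughout with reduced cohomology $\HH^{n-1}(F)$ handles this automatically, and I would check consistency with the case $n=1$: there the formula gives $n_{\mathbf 1}=0$ and $n_\chi=1$ for $\chi\neq{\mathbf 1}$, recovering exactly Proposition \ref{monodromy0}.
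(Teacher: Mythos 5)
Your route is genuinely different from the paper's: the paper never invokes the local Fourier transform here, but instead computes, for each $\chi$, the number of Jordan blocks as $\dim(\GGG_f\otimes\LL_{\bar\chi})^{I_0}$, converts this by a weight argument into the dimension of the low-weight part of $\HH^{n+1}_c(\GG_{m,\bar k}\times\AAA^n_{\bar k},\LL_{\psi(tf(x))}\otimes\LL_{\bar\chi(t)})$, and evaluates that via Katz's theorem on nonsingular multiplicative character sums (for $\chi\neq{\mathbf 1}$) and via the primitive middle cohomology of the smooth hypersurface $Z_0\subset\PP^{n-1}$ (for $\chi={\mathbf 1}$); semisimplicity then comes for free because the Jordan-block counts already sum to the rank $(d-1)^n$. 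Your plan (read the $I_0$-action off the monodromy at $t=\infty$ of the input to the Fourier transform, computed as the scaling action on the Milnor fibre of $f_d$ via Fermat and Sebastiani--Thom) is an attractive alternative, and your eigenvalue multiplicities on the Fermat example are the correct numbers.

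However, there is a genuine gap, and it sits exactly at the delicate point of the statement: the multiplicity $n_{\mathbf 1}$ of the trivial character, i.e.\ the $(-1)^n$ correction. Laumon's $LFT^{(\infty,0)}$ does \emph{not} preserve ranks and does \emph{not} carry the trivial character to the trivial character: on a tame $I_\infty$-representation $V$ its output has rank $\mathrm{rank}(V)-\dim V^{I_\infty}$, so unramified summands are killed, and in the stationary-phase decomposition the unramified-at-infinity part of the input contributes punctual sheaves supported at $0$ (just as the constant sheaf has punctual Fourier transform), not a trivial $I_0$-summand of the lisse sheaf $\GGG_f$. What the theory identifies is the quotient of $\GGG_f(0)$ by its inertia invariants with the local Fourier transform of the non-invariant part at infinity; your asserted identification of the \emph{full} $I_0$-representation with $LFT^{(\infty,0)}$ of the \emph{full} monodromy at infinity is false precisely on the trivial part. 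Nor does ``working with reduced cohomology'' repair this: for $n\geq 2$ the reduced cohomology $\HH^{n-1}(F)$ of the Milnor fibre has a nonzero eigenvalue-$1$ subspace (of dimension exactly $n_{\mathbf 1}$), and that is the part the local Fourier transform kills. To complete your argument you must recover $n_{\mathbf 1}$ by a rank count --- $\mathrm{rank}\,\GGG_f=(d-1)^n$ minus the rank $(d-1)\cdot\frac{1}{d}((d-1)^n-(-1)^n)$ of the nontrivial part, which indeed equals $(-1)^n+\frac{1}{d}((d-1)^n-(-1)^n)$ --- and you must also rule out nontrivial unipotent Jordan blocks in the leftover part (this follows because the quotient $\GGG_f(0)/\GGG_f(0)^{I_0}$ you computed has no trivial constituents, but the point is never addressed). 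Secondary, fixable issues: the reduction of the monodromy at infinity of $\R f_!\QQ$ to that of the homogeneous form $f_d$ and its identification with the scaling action need an actual argument in the \'etale setting (pull back by the $d$-th power map and check that the compactified family together with its boundary divisor extends smoothly across infinity, using smoothness of $\{f_d=0\}$), and Sebastiani--Thom plus the deformation to the Fermat are complex-topological facts that must be transported to characteristic $p$ by a lifting/comparison argument.
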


\begin{proof}
 We will show that, for every $\chi$, the action of $I_0$ on $\GGG_f$ contains $n_\chi$ Jordan blocks for the character $\chi$. Since these numbers add up to $(d-1)^n$, which is the dimension of the representation $\GGG_f$, this will prove that the action is semisimple and determine it completely.

Let $\chi$ be non-trivial such that $\chi^d={\mathbf 1}$. Since adding a constant $a$ to $f$ corresponds to tensoring $\GGG_f$ with the Artin-Schreier sheaf $\LL_{\psi_a}$ and this does not change the monodromy at $0$, we can assume that $\GGG_f$ is totally wild at $\infty$ (or equivalently, that the hypersurface $f(x)=0$ is non-singular). Then so is $\GGG_f\otimes\LL_{\bar\chi}$. The number of Jordan blocks associated of $\LL_\chi$ in the representation of $I_0$ given by $\GGG_f$ is the dimension of the $I_0$-invariant subspace of $\GGG_f\otimes\LL_{\bar\chi}$. If $j:\GG_{m,\bar k}\to\AAA^1_{\bar k}$ and $i:\{0\}\to\AAA^1_{\bar k}$ are the inclusions, we have an exact sequence
$$
0\to j_!(\GGG_f\otimes\LL_{\bar\chi})\to j_\star (\GGG_f\otimes\LL_{\bar\chi})\to i_\star i^\star j_\star(\GGG_f\otimes\LL_{\bar\chi})\to 0
$$
and therefore
$$
0\to(\GGG_f\otimes\LL_{\bar\chi})^{I_0}\to\HH^1_c(\GG_{m,\bar k},\GGG_f\otimes\LL_{\bar\chi})\to\HH^1_c(\AAA^1_{\bar k},j_\star(\GGG_f\otimes\LL_{\bar\chi}))\to 0.
$$
Since $\GGG_f\otimes\LL_{\bar\chi}$ is totally wild at $\infty$, the latter cohomology group is pure of weight $n+1$. So the dimension of $(\GGG_f\otimes\LL_{\bar\chi})^{I_0}$ is the dimension of the weight $\leq n$ part of $\HH^1_c(\GG_{m,\bar k},\GGG_f\otimes\LL_{\bar\chi})$.

By the projection formula, $$\GGG_f\otimes\LL_{\bar\chi}=(\R^n\pi_!\LL_{\psi(tf(x))})\otimes\LL_{\bar\chi}\cong\R^n\pi_!(\LL_{\psi(tf(x))}\otimes\LL_{\bar\chi(t)}),$$
so
$$\HH^1_c(\GG_{m,\bar k},\GGG_f\otimes\LL_{\bar\chi})=\HH^{n+1}_c(\GG_{m,\bar k}\times\AAA^n_{\bar k},\LL_{\psi(tf(x))}\otimes\LL_{\bar\chi(t)})
$$
since $\R^i\pi_!\LL_{\psi(tf(x))}=0$ for $i\neq n$. Let $Z\subset\AAA^n_k$ be the closed subset defined by $f(x)=0$ and $U$ its open complement. The sheaf $\LL_{\psi(tf(x))}$ is trivial on $\GG_m\times Z$, so $\HH^\star_c(\GG_{m,\bar k}\times Z,\LL_{\psi(tf(x))}\otimes\LL_{\bar\chi(t)})=\HH^\star_c(\GG_{m,\bar k}\times Z,\LL_{\bar\chi(t)})=\HH^\star_c(\GG_{m,\bar k},\LL_{\bar\chi})\otimes\HH^\star_c(Z\otimes\bar k,\QQ)=0$ since $\chi$ is non-trivial. By excision we get an isomorphism $\HH^{n+1}_c(\GG_{m,\bar k}\times\AAA^n_{\bar k},\LL_{\psi(tf(x))}\otimes\LL_{\bar\chi(t)})\cong\HH^{n+1}_c(\GG_{m,\bar k}\times U,\LL_{\psi(tf(x))}\otimes\LL_{\bar\chi(t)})$.

Consider the automorphism $\phi:\GG_m\times U\to\GG_m\times U$ given by $\phi(t,x)=(tf(x),x)$. Then $\phi_\star(\LL_{\psi(tf(x))}\otimes\LL_{\bar\chi(t)})=\LL_{\psi(t)}\otimes\LL_{\bar\chi(t/f(x))}=\LL_{\psi(t)}\otimes\LL_{\bar\chi(t)}\otimes\LL_{\chi(f(x))}$. So
$$
\HH^{n+1}_c(\GG_{m,\bar k}\times U,\LL_{\psi(tf(x))}\otimes\LL_{\bar\chi(t)})\cong\HH^{n+1}_c(\GG_{m,\bar k}\times U,\LL_{\psi(t)}\otimes\LL_{\bar\chi(t)}\otimes\LL_{\chi(f(x))})
$$
which, by K\"unneth, is isomorphic to $\HH^1_c(\GG_{m,\bar k},\LL_\psi\otimes\LL_{\bar\chi})\otimes\HH^n_c(U\otimes\bar k,\LL_{\chi(f)})$ (since $\HH^i_c(\GG_{m,\bar k},\LL_\psi\otimes\LL_{\bar\chi})=0$ for $i\neq 1$). The first factor is one-dimensional and pure of weight $1$, so we want the dimension of the weight $\leq n-1$ part of $\HH^n_c(U\otimes\bar k,\LL_{\chi(f)})$. By \cite[Theorem 2.2]{mult}, this dimension is $n_\chi=\frac{1}{d}((d-1)^n-(-1)^n)$.

Similarly, if $\chi={\mathbf 1}$ is the trivial character, the searched dimension is the dimension of the weight $\leq n$ part of $\HH^{n+1}_c(\GG_{m,\bar k}\times\AAA^n_{\bar k},\LL_{\psi(tf(x))})$. From the exact sequence
$$
\ldots\to\HH^n_c(\{0\}\times\AAA^n_{\bar k},\QQ)\to\HH^{n+1}_c(\GG_{m,\bar k}\times\AAA^n_{\bar k},\LL_{\psi(tf(x))})\to
$$
$$
\to\HH^{n+1}_c(\AAA^1_{\bar k}\times\AAA^n_{\bar k},\LL_{\psi(tf(x))})\to\HH^{n+1}_c(\{0\}\times\AAA^n_{\bar k},\QQ)\to\ldots
$$
we get an isomorphism $\HH^{n+1}_c(\GG_{m,\bar k}\times\AAA^n_{\bar k},\LL_{\psi(tf(x))})\cong\HH^{n+1}_c(\AAA^1_{\bar k}\times\AAA^n_{\bar k},\LL_{\psi(tf(x))})$. Now let $\pi:\AAA^1\times\AAA^n\to\AAA^n$ be the projection, by the base change theorem we have $\R^2\pi_!\LL_{\psi(tf(x))}=i_\star\QQ(-1)$, where $i:Z\to\AAA^n$ is the inclusion of the closed set where $f(x)=0$, and $\R^i\pi_!\LL_{\psi(tf(x))}=0$ for $i\neq 2$. So we need the dimension of the weight $\leq n-2$ part of $\HH^{n-1}_c(Z,\QQ)$. Let $\overline Z$ be the projective closure of $Z$ and $Z_0={\overline Z}\backslash Z$, we have an exact sequence
$$
\ldots\to\HH^{n-2}(\overline Z,\QQ)\to\HH^{n-2}(Z_0,\QQ)\to\HH^{n-1}_c(Z,\QQ)\to\HH^{n-1}(\overline Z,\QQ)\to\ldots
$$
Since $\overline Z$ is smooth, $\HH^{n-1}(\overline Z,\QQ)$ is pure of weight $n-1$, and therefore the weight $\leq n-2$ part of $\HH^{n-1}_c(Z,\QQ)$ is the cokernel of the map $\HH^{n-2}(\overline Z,\QQ)\to\HH^{n-2}(Z_0,\QQ)$, that is, the primitive part $\mathrm{Prim}^{n-2}(Z_0,\QQ)$ of the middle cohomology group of $Z_0$, which has dimension $n_{\mathbf 1}=(-1)^n+\frac{1}{d}((d-1)^n-(-1)^n)$.
\end{proof}

\begin{cor}\label{determinantn}
 Let $s=\sum_{i=1}^{(d-1)^n}s_i$. Over $\bar k$, the determinant of $\GGG_f$ is the Artin-Schreier sheaf $\LL_{\psi_s}$ if $n(d-1)$ is even, and the product $\LL_\rho\otimes\LL_{\psi_s}$ if $n(d-1)$ is odd.
\end{cor}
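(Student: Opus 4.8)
The plan is to pin down the geometric determinant by computing its local monodromy at $0$ and at $\infty$ and then observing that a rank-one lisse $\QQ$-sheaf on $\GG_{m,\bar k}$ which is unramified at both $0$ and $\infty$ extends to a lisse sheaf on $\PP^1_{\bar k}$, and is therefore geometrically trivial. Since the statement is purely geometric, no Frobenius or Gauss-sum bookkeeping is needed, in contrast with Corollary \ref{determinant}.

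First I would read the monodromy at $\infty$ off Proposition \ref{monodromyinfn}. Taking determinants of the displayed decompositions and using $\sum_i s_i=s$ together with $\LL_{\psi_a}\otimes\LL_{\psi_b}=\LL_{\psi_{a+b}}$, the determinant restricted to $I_\infty$ is $\LL_{\psi_s}$ when $n$ is even and $\LL_{\rho^{(d-1)^n}}\otimes\LL_{\psi_s}$ when $n$ is odd. As $\rho$ has order two, $\rho^{(d-1)^n}$ is trivial exactly when $(d-1)^n$ is even, i.e.\ when $d$ is odd; so in every case the determinant at $\infty$ equals $\LL_{\psi_s}$ if $n(d-1)$ is even and $\LL_\rho\otimes\LL_{\psi_s}$ if $n(d-1)$ is odd.

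Next I would treat the tame monodromy at $0$ using the preceding proposition. There $I_0$ acts as $\bigoplus_\chi n_\chi\LL_\chi$ with $\chi^d={\mathbf 1}$; since the trivial character contributes a trivial factor to the determinant and $n_\chi=N:=\frac1d((d-1)^n-(-1)^n)$ is the same integer for every non-trivial $\chi$, the determinant at $0$ is $(\bigotimes_\chi\LL_\chi)^{\otimes N}=\LL_{(\prod_\chi\chi)^N}$. The product of all characters with $\chi^d={\mathbf 1}$ is the product of all elements of a cyclic group of order $d$, hence trivial for $d$ odd and equal to $\rho$ (its unique element of order two) for $d$ even. Thus the determinant is unramified at $0$ when $d$ is odd, and equals $\rho^N$ when $d$ is even.

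The only genuine computation is to check that the order-two factor appears at $0$ exactly when it appears at $\infty$; for $d$ even this amounts to the parity identity $N\equiv n\pmod 2$. Expanding $(d-1)^n=\sum_{j=0}^n\binom{n}{j}(-1)^{n-j}d^j$ cancels the $j=0$ term against $(-1)^n$, giving $N=\sum_{j=1}^n\binom{n}{j}(-1)^{n-j}d^{j-1}$; since $d$ is even every term with $j\ge 2$ is even and the $j=1$ term is $n(-1)^{n-1}\equiv n\pmod 2$. Hence $\rho^N=\rho$ iff $n$ is odd iff (for $d$ even) $n(d-1)$ is odd, matching the behaviour at infinity. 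Setting $L={\mathbf 1}$ if $n(d-1)$ is even and $L=\LL_\rho$ otherwise, the sheaf $\det(\GGG_f)\otimes\LL_{\psi_{-s}}\otimes L$ is then unramified at both $0$ and $\infty$ (here $\LL_{\psi_{-s}}$ is unramified at $0$ and kills the wild part at $\infty$, while $L$ absorbs the residual $\rho$ and $L^{\otimes 2}={\mathbf 1}$), so it is geometrically trivial, and therefore $\det(\GGG_f)=\LL_{\psi_s}\otimes L$, as claimed. The main obstacle is thus entirely bookkeeping: reconciling the $\rho$-factor coming from $N\bmod 2$ at $0$ with the one coming from $(d-1)^n\bmod 2$ at $\infty$, which the identity $N\equiv n\pmod 2$ (valid for $d$ even) makes consistent.
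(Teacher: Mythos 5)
Your proof is correct and follows essentially the same route as the paper's: compute the local monodromy of the rank-one sheaf $\det(\GGG_f)$ at $0$ and at $\infty$ from the two preceding propositions, match the $\rho$-factors via the congruence $n_\rho\equiv n\pmod 2$ for $d$ even, and conclude that the suitably twisted determinant is everywhere unramified, hence geometrically trivial. The only differences are cosmetic: you phrase the computation at $0$ as the product of all elements of a cyclic group of order $d$ (the paper pairs conjugate characters, which is the same argument), and you supply an explicit binomial-expansion proof of the parity congruence that the paper merely asserts.
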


\begin{proof}
 The determinant is a smooth sheaf on $\GG_m$ of rank $1$. At $0$, its monodromy is the product of $\chi^{n_\chi}$ for all characters $\chi$ of $I_0$ such that $\chi^d$ is trivial. Since the non-trivial characters (except for the quadratic one) appear in conjugate pairs, the product is trivial if $d$ is odd, and comes down to $\rho^{n_\rho}$, which is $\rho$ or ${\mathbf 1}$ depending on the parity of $n_\rho=\frac{1}{d}((d-1)^n-(-1)^n)$, which is congruent to $n$ mod $2$, if $d$ is even.

At infinity, its monodromy is the product of the $\LL_{\psi_{s_i}}$ (resp. of the $\LL_\rho\otimes\LL_{\psi_{s_i}}$) if $n$ is even (resp. if $n$ is odd), which is $\LL_{\psi_s}$ (resp. $\LL_\rho\otimes\LL_{\psi_s}$) if $n(d-1)$ is even (resp. if $n(d-1)$ is odd). We conclude as in Corollary \ref{determinant}.
\end{proof}

We now give the higher dimensional analogue of Corollary \ref{bound}:

\begin{cor}
 Let $f\in k[x_1,\ldots,x_n]$ be a polynomial of degree $d$ prime to $p$ and $r$ a positive integer. Suppose that $p>2$, the highest degree homogeneous part of $f$ defines a non-singular hypersurface, the subscheme of $\AAA^n_k$ defined by the ideal $\langle\partial f/\partial x_1,\ldots,\partial f/\partial x_n\rangle$ is finite \'etale over $k$ and the images of its $\bar k$-points under $f$ are distinct. If $nr$ is even, suppose additionally that the hypersurface defined by $f(x_{1,1},\ldots,x_{1,n})+\cdots+f(x_{r,1},\ldots,x_{r,n})=0$ in $\AAA^{nr}_k=\Spec k[x_{i,j}|1\leq i\leq r,1\leq j\leq n]$ is non-singular. Then the number $N_r(f)$ of $k_r$-rational points on the hypersurface
$$
y^q-y=f(x_1,\ldots,x_n)
$$
satisfies the estimate
$$
|N_r(f)-q^{nr}|\leq C_{d,r} q^{\frac{nr+1}{2}}
$$
where
$$
C_{d,r}=\sum_{i=0}^r |i-1| {{(d-1)^n+r-i-1}\choose{r-i}}{{(d-1)^n}\choose{i}}
$$
is independent of $q$.
\end{cor}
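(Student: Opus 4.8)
The plan is to follow the proof of Corollary \ref{bound} essentially verbatim, substituting the rank $(d-1)^n$ and weight $n$ of $\GGG_f$ for the rank $d-1$ and weight $1$ used there, and keeping track of the extra sign $(-1)^n$ coming from the placement of $K_f$ in cohomological degree $n$. Concretely, I would start from
$$N_r(f)-q^{nr}=(-1)^n\sum_{i=0}^r(-1)^{i-1}(i-1)\sum_{t\in k^\star}\Tr(\Frob_t\mid(\Sym^{r-i}\GGG_f\otimes\wedge^i\GGG_f)_t)$$
and apply the Grothendieck trace formula on $\GG_{m,\bar k}$ to each inner sum. Since $\GG_m$ is an affine curve and these sheaves have no punctual sections, $\HH^0_c$ vanishes, so each inner sum equals $\Trace(\Frob_k\mid\HH^2_c)-\Trace(\Frob_k\mid\HH^1_c)$ of $\Sym^{r-i}\GGG_f\otimes\wedge^i\GGG_f$. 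The argument then rests on two things: the vanishing of $\HH^2_c$, and a bound on the dimension and weight of $\HH^1_c$.

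The vanishing of $\HH^2_c$ is the only genuinely geometric step. By duality it amounts to showing that $\Sym^{r-i}\GGG_f\otimes\wedge^i\GGG_f$ has no non-zero $\pi_1(\GG_{m,\bar k})$-invariants for every $i$, and as in Corollary \ref{bound} it suffices to treat the larger sheaf $\bigotimes^r\GGG_f$ and, since $I_\infty\subseteq\pi_1(\GG_{m,\bar k})$, to rule out $I_\infty$-invariants. Here Proposition \ref{monodromyinfn} is exactly what is needed: $I_\infty$ acts on $\GGG_f$ through $\bigoplus_i\LL_{\psi_{s_i}}$ when $n$ is even and through $\bigoplus_i\LL_\rho\otimes\LL_{\psi_{s_i}}$ when $n$ is odd, so on the $r$-fold tensor power the characters that occur are $\LL_{\rho^{r}}\otimes\LL_{\psi_{s_{i_1}+\cdots+s_{i_r}}}$ (with the $\rho$-factor absent when $n$ is even). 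When $nr$ is odd, forcing $n$ and $r$ both odd, the factor $\LL_{\rho^r}=\LL_\rho$ is tamely and non-trivially ramified and hence can never equal a character $\LL_{\psi_t}$ (which is trivial or totally wild), so every summand is non-trivial and no extra hypothesis is needed. When $nr$ is even, $\LL_{\rho^r}$ is trivial and the summand is trivial exactly when $s_{i_1}+\cdots+s_{i_r}=0$; but the critical points of $f(x_{1,1},\ldots)+\cdots+f(x_{r,1},\ldots)$ on $\AAA^{nr}$ are precisely the tuples of critical points of $f$, at which the function takes the value $s_{i_1}+\cdots+s_{i_r}$, so the non-singularity hypothesis says exactly that none of these sums vanishes. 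In every case there are no $I_\infty$-invariants, hence none under $\pi_1(\GG_{m,\bar k})$, and $\HH^2_c=0$.

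With $\HH^0_c=\HH^2_c=0$ we have $\dim\HH^1_c=-\chi_c(\GG_{m,\bar k},\Sym^{r-i}\GGG_f\otimes\wedge^i\GGG_f)$, and since $\chi_c(\GG_{m,\bar k})=0$ the Grothendieck--Ogg--Shafarevich formula gives $\dim\HH^1_c=\Swan_0+\Swan_\infty$. Because $\GGG_f$ is tame at $0$ (a direct sum of Kummer sheaves, by the proposition computing the monodromy at $0$), so are its symmetric, exterior and tensor constructions, whence $\Swan_0=0$; and all slopes of $\GGG_f$ at infinity are $0$ or $1$ by Proposition \ref{monodromyinfn} (the characters $\LL_{\psi_s}$ have slope $1$ or $0$ and $\LL_\rho$ is tame), so the slopes of $\Sym^{r-i}\GGG_f\otimes\wedge^i\GGG_f$ are again $\leq 1$ and $\Swan_\infty\leq\mathrm{rank}={{(d-1)^n+r-i-1}\choose{r-i}}{{(d-1)^n}\choose{i}}$. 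Finally $\Sym^{r-i}\GGG_f\otimes\wedge^i\GGG_f$ is pure of weight $nr$, so $\HH^1_c$ is mixed of weight $\leq nr+1$ and $|\Trace(\Frob_k\mid\HH^1_c)|\leq\dim\HH^1_c\cdot q^{(nr+1)/2}$; summing against the coefficients $|i-1|$ produces the constant $C_{d,r}$.

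The only delicate point is the no-invariants step, and it is entirely encapsulated in Proposition \ref{monodromyinfn}: once the local monodromy at infinity is in hand, the remainder is the same cohomological bookkeeping as in Corollary \ref{bound}. The one thing to watch is that the parity of $n$ now enters on its own, both through the $\LL_\rho$-factor in the odd case and through the overall sign $(-1)^n$, so one must track the parity of $nr$ (rather than of $r$) when deciding whether the auxiliary non-singularity hypothesis is required.
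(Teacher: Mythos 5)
Your proposal is correct and is essentially the paper's own proof: the paper simply states that the argument is identical to that of Corollary \ref{bound}, using Proposition \ref{monodromyinfn} in place of Proposition \ref{monodromyinf}, with the remark that in the $n$ even case the non-singularity hypothesis is needed for every $r$ because the Kummer factor $\LL_\rho$ is absent from the monodromy at infinity. Your parity analysis (hypothesis needed exactly when $nr$ is even, since $\LL_{\rho^{nr}}$ is then trivial on the summands of $\bigotimes^r\GGG_f$) and the ensuing Grothendieck--Ogg--Shafarevich and weight bookkeeping match the paper's intended argument exactly.
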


The proof is identical to the one of Corollary \ref{bound}, using Proposition \ref{monodromyinfn}. In the $n$ even case we need the non-singularity hypothesis for any $r$, since the Kummer factor does not appear in the monodromy at infinity.

\end{document}